\newtheorem{theorem}{Theorem}
\newtheorem{lemma}[theorem]{Lemma}
\newtheorem{proposition}[theorem]{Proposition}
\newtheorem{lettertheorem}{Theorem}
\newtheorem{letterlemma}[lettertheorem]{Lemma}
\theoremstyle{definition}
\theoremstyle{remark}
\numberwithin{equation}{section}
\newcommand{\D}{\mathbb{D}}
\newcommand{\DD}{\widehat{\mathcal{D}}}
\newcommand{\Dd}{\widecheck{\mathcal{D}}}
\newcommand{\DDD}{\mathcal{D}}
\newcommand{\N}{\mathbb{N}}
\newcommand{\C}{\mathbb{C}}
\renewcommand{\phi}{\varphi}
\newcommand{\whw}{\widehat{\omega}}
\newcommand{\whW}{\widehat{W}}
\newcommand{\veps}{\varepsilon}
     \def\om{\omega}      
                  \def\z{\zeta}
\def\LL{\mathcal{L}}
\newenvironment{Prf}{\noindent{\emph{Proof of}}}
{\hfill$\Box$ }
\begin{document}
\title{Embedding theorems for Bergman-Zygmund spaces induced by doubling weights}
\allowdisplaybreaks

\keywords{Bergman-Zygmund space; Carleson measure; doubling weight; Lebesgue-Zygmund space; differentiation operator.}

\thanks{The author is supported in part by Finnish Cultural Foundation, North Karelia Regional fund and Academy of Finland 356029.}

\author[A. Pennanen]{Atte Pennanen}
\address{University of Eastern Finland, P.O.Box 111, 80101 Joensuu, Finland}
\email{atte.pennanen@uef.fi}

\begin{abstract}
    Let $0<p<\infty$ and $\Psi: [0,1) \to (0,\infty)$, and let $\mu$ be a finite positive Borel measure on the unit disk $\D$ of the complex plane. We define the Lebesgue-Zygmund space $L^p_{\mu,\Psi}$ as the space of all measurable functions $f$ on $\D$ such that $\int_{\D}|f(z)|^p\Psi(|f(z)|)\,d\mu(z)<\infty$. The weighted Bergman-Zygmund space $A^p_{\om,\Psi}$ induced by a weight function $\om$ consists of analytic functions in $L^p_{\mu,\Psi}$ with $d\mu=\om\,dA$. 

    Let $0<q<p<\infty$ and let $\om$ be radial weight on $\D$ which has certain two-sided doubling properties. In this study, we will characterize the measures $\mu$ such that the identity mapping $I: A^p_{\om,\Psi} \to L^q_{\mu,\Phi}$ is bounded and compact, when we assume $\Psi,\Phi$ to be almost monotonic and to satisfy certain doubling properties. In addition, we apply our result to characterize the measures for which the differentiation operator $D^{(n)}: A^p_{\om,\Psi} \to L^q_{\mu,\Phi}$ is bounded and compact.
\end{abstract}

\maketitle

\section{Introduction}

Let $0<p<\infty$, $\Psi: [0,\infty) \to (0,\infty)$ and $\mu$ be a finite positive Borel measure on the unit disk $\D=\{z \in \C: |z|<1\}$ of the complex plane. The Lebesgue-Zygmund space $L^p_{\mu,\Psi}$ consists of measurable functions $f: \D \to \C$ such that 
\begin{equation*}
    \|f\|_{L^p_{\mu,\Psi}}^p=\int_{\D}|f(z)|^p\Psi(|f(z)|)\,d\mu(z)<\infty.
\end{equation*}
We are interested in the case when the inducing function $\Psi$ is almost monotonic (also known as essentially monotonic in some references) and satisfies 
\begin{equation*}
    \Psi(x) \asymp \Psi(x^2), \quad 0\leq x<\infty.
\end{equation*}
We denote the class of functions satisfying these properties as $\LL$. Here we say that a function $\Psi$ is almost increasing if there exists a constant $C>0$ such that $\Psi(x)\leq C\Psi(y)$ whenever $x<y$. Almost decreasing functions are defined in a comparable way. The prototypes for this class are logarithmic functions of the type $x \mapsto \log(e+x)^{\alpha}$ for $\alpha \in \mathbb{R}$. In this situation we are talking about objects related to the classical Lebesgue-Zygmund space, see for example \cite{BR} and \cite{BR2}. For basic properties of functions in $\LL$, see \cite{BergZygD}. Note that $\|\cdot\|_{L^p_{\mu,\Psi}}$ is in general not a norm even for $p>1$. However, it is a quasinormed topological vector space, see \cite{BergZygD}. Here we do not require absolute homogeneity from a quasinorm.

Let $\om: \D \to [0,\infty)$ be an integrable function over $\D$ with respect to the Lebesgue measure. We call these functions weights. The special case of the Lebesgue-Zygmund spaces we are focusing on is when the measure $\mu$ is given by $d\mu=\om\,dA$ for some weight $\om$. Here $dA$ is the normalized Lebesgue area measure. In this case, we denote $L^p_{\mu,\Psi}=L^p_{\om,\Psi}$. We denote $\mathcal{H}(\D)$ to be the space of analytic functions in $\D$, and say that $f$ belongs to the weighted Bergman-Zygmund space $A^p_{\om,\Psi}$ if $f \in L^p_{\om,\Psi} \cap \mathcal{H}(\D)$. When the inducing weight is the standard weight, meaning that $\om(z)=(\alpha+1)(1-|z|^2)^{\alpha}$ for $\alpha>-1$, and $\Psi$ is a logarithmic function, these spaces were studied in \cite{BergZyg}.

In this study we are interested in the weighted Bergman-Zygmund spaces induced by radial doubling weights. A weight $\om$ is radial if $\om(z)=\om(|z|)$ for every $z \in \D$. To define doubling weights, we introduce some more notation. We let $\whw(z)=\int_{|z|}^{1}\om(r)\,dr$ to be the tail integral defined for $z \in \D$. We proceed to assume that $\whw(z)>0$ for every $z \in \D$, as otherwise $A^p_{\om,\Psi}=\mathcal{H}(\D)$, which is certainly not interesting. A weight $\om$ belongs to the class $\DD$, if
there exists a constant $C=C(\om)>1$ such that 
\begin{equation*}
    \whw(r)\leq C\whw\left(\frac{1+r}{2}\right), \quad 0\leq r<1,
\end{equation*}
holds. In a similar manner, a weight $\om$ belongs to $\Dd$ if there exist constants $C=C(\om)>1$ and $K=K(\om)>1$ such that
\begin{equation*}
    \whw(r) \geq C\whw\left(1-\frac{1-r}{K}\right), \quad 0\leq r<1.
\end{equation*}
We denote $\DDD = \DD \cap \Dd$, and call weights in this intersection as doubling. These classes of weights are quite general and vast, as the weights are not required to be differentiable, continuous nor strictly positive. These classes of weights emerge in a natural way in the operator theory of the weighted Bergman spaces. For instance, doubling weights are closely connected to questions related to the boundedness of the Bergman projection and the Littlewood-Paley estimates \cite{PelRat2020}. More information on these classes of weights and their importance can be found, for example, in \cite{Duan} and \cite{PRV}.

An operator $T$ is said to be bounded if it maps bounded sets into bounded sets, and compact if it maps a bounded neighborhood of the origin onto a relatively compact set. We say that $\mu$ is a $(q,\Phi)$-Carleson measure for $A^p_{\om,\Psi}$ if the identity operator $I: A^p_{\om,\Psi} \to L^q_{\mu,\Phi}$ is bounded. Carleson measures for various spaces of analytic functions have been studied since the work of Carleson~\cite{Carleson}. Since then, the problem has been solved in several different spaces of analytic functions, including the weighted Bergman spaces, see for example \cite{Hastings}, \cite{Luecking1}, \cite{Luecking2}, \cite{Wu}. In \cite{BergZygD}, the authors characterized the $(q,\Phi)$-Carleson measures for $A^p_{\om,\Psi}$ in terms of simple geometric conditions when $0<p\leq q<\infty$. In this study, we solve the remaining case $0<q<p<\infty$ using methods similar to those shown in \cite{Lu1993}. 

To state our result, a bit more notation is in order. Let $\rho(z,w)=\left|\frac{z-w}{1-\overline{z}w}\right|$ be the pseudohyperbolic distance of $z,w \in \D$. We denote $\Delta(a,r)=\left\{z \in \D: \rho(a,z)<r\right\}$ to be the pseudohyperbolic disk centered at $a \in \D$ with a radius $r \in (0,1)$. It is known that $\Delta(a,r)$ coincides with the Euclidean disk $D(A,R)$, where $A=\frac{1-r^2}{1-r^2|a|^2}a$ and $R=\frac{1-|a|^2}{1-r^2|a|^2}r$. Similarly, we define the Carleson square induced by a point $a$ as 
$$S(a)=\left\{re^{it} \in \D: |a|\leq r<1,  |\text{arg}(ae^{-it})|\leq \frac{1-|a|}{2}\right\},$$
when $a \in \D \setminus \{0\}$. For convenience, we set $S(0)=\D$. For a measurable set $E$, we define $\om(E)=\int_{E}\om\,dA$. In addition, we denote $\widetilde{\om}(z)=\frac{\whw(z)}{1-|z|}$. Note that even though $\om$ may vanish in a set of positive measure, $\widetilde{\om}$ is a strictly positive function in $\D$. We also denote $\om_{[\alpha]}(z) = (1-|z|)^{\alpha}\om(z)$ for a radial weight $\om$ and $\alpha \in \mathbb{R}$. With this notation, our characterization can be stated as the following:

\begin{theorem}\label{Theorem1}
     Let $0<q<p<\infty$, $\Phi,\Psi \in \LL$ and $\om \in \DDD$, and let $\mu$ be a positive Borel measure on $\D$. Then the following statements are equivalent: 
     \begin{itemize}
         \item[(i)]$\mu$ is a $(q,\Phi)$-Carleson measure for $A^p_{\om,\Psi}$;
         \item[(ii)] $I: A^p_{\om,\Psi} \to L^q_{\mu,\Phi}$ is compact;
         \item[(iii)] The function 
         \begin{equation*}
         \Upsilon^{\mu,\om}_{\Phi,\Psi,r}(z)=
         \frac{\mu(\Delta(z,r))\Phi\left(\frac{1}{1-|z|}\right)}{\om(S(z))\Psi\left(\frac{1}{1-|z|}\right)^{\frac{q}{p}}}, \quad z \in \D,
     \end{equation*}
        belongs to $L^{\frac{p}{p-q}}_{\widetilde{\om}}$ for some (equivalently for all) $r \in (0,1)$.
     \end{itemize}
\end{theorem}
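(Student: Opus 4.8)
The plan is to prove the cycle (iii) $\Rightarrow$ (ii) $\Rightarrow$ (i) $\Rightarrow$ (iii); the implication (ii) $\Rightarrow$ (i) is immediate, since compact operators are bounded. I would fix once and for all a separated sequence $\{z_k\}\subset\D$ whose pseudohyperbolic disks $\Delta(z_k,r)$ cover $\D$ with uniformly bounded overlap, and use the doubling-weight estimate $\om(S(a))\asymp\whw(a)(1-|a|)\asymp\widetilde{\om}(a)(1-|a|)^2$. Since $1-|z|\asymp1-|z_k|$ on $\Delta(z_k,r)$ and, by the covering flexibility, $\mu(\Delta(z,r))\asymp\mu(\Delta(z_k,r'))$ there, the function $\Upsilon^{\mu,\om}_{\Phi,\Psi,r}$ is comparable to its value at $z_k$ throughout $\Delta(z_k,r)$, whence
\begin{equation*}
\|\Upsilon^{\mu,\om}_{\Phi,\Psi,r}\|_{L^{p/(p-q)}_{\widetilde{\om}}}^{p/(p-q)}\asymp\sum_k\Upsilon^{\mu,\om}_{\Phi,\Psi,r}(z_k)^{p/(p-q)}\,\om(S(z_k)).
\end{equation*}
The same covering estimates reconcile different radii, yielding the stated equivalence of (iii) for some and for all $r$.

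For (iii) $\Rightarrow$ (ii) I would first establish boundedness. Set $A_k=\int_{\Delta(z_k,r')}|f|^p\Psi(|f|)\,\om\,dA$. A sub-mean-value estimate for $|f|^q\Phi(|f|)$ over pseudohyperbolic disks (valid for $f\in A^p_{\om,\Psi}$, cf. \cite{BergZygD}) together with the slow variation of $\Psi,\Phi\in\LL$ yields
\begin{equation*}
\sup_{\Delta(z_k,r)}|f|^q\Phi(|f|)\lesssim\Bigl(\frac{A_k}{\om(S(z_k))}\Bigr)^{q/p}\frac{\Phi\bigl(\tfrac{1}{1-|z_k|}\bigr)}{\Psi\bigl(\tfrac{1}{1-|z_k|}\bigr)^{q/p}}.
\end{equation*}
Summing against $\mu(\Delta(z_k,r))$ and applying Hölder with exponents $p/q$ and $p/(p-q)$ factors the $k$-th term as $A_k^{q/p}\bigl(\Upsilon^{\mu,\om}_{\Phi,\Psi,r}(z_k)\,\om(S(z_k))^{(p-q)/p}\bigr)$, so that
\begin{equation*}
\int_\D|f|^q\Phi(|f|)\,d\mu\lesssim\Bigl(\sum_kA_k\Bigr)^{q/p}\Bigl(\sum_k\Upsilon^{\mu,\om}_{\Phi,\Psi,r}(z_k)^{p/(p-q)}\om(S(z_k))\Bigr)^{(p-q)/p}\lesssim\|f\|^q_{A^p_{\om,\Psi}}\,\|\Upsilon^{\mu,\om}_{\Phi,\Psi,r}\|_{L^{p/(p-q)}_{\widetilde{\om}}},
\end{equation*}
where $\sum_kA_k\lesssim\|f\|^p_{A^p_{\om,\Psi}}$ by bounded overlap. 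Compactness follows by a normal-families argument: for a bounded sequence $f_j\to0$ uniformly on compacta, split the lattice into $|z_k|\le R$ (controlled by uniform convergence) and $|z_k|>R$ (controlled by the tail of the convergent sum in (iii), small for $R$ near $1$).

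The crux is (i) $\Rightarrow$ (iii), for which I would use Luecking's randomization. With Rademacher functions $\varepsilon_k(t)$, $t\in[0,1]$, nonnegative coefficients $\lambda_k$, and normalized test functions $b_{z_k}$ that peak on $\Delta(z_k,r)$ with $|b_{z_k}|\asymp\om(S(z_k))^{-1/p}$ there, set $f_t=\sum_k\lambda_k\varepsilon_k(t)\,b_{z_k}$. Two estimates drive the argument: an upper bound for $\int_0^1\|f_t\|^p_{A^p_{\om,\Psi}}\,dt$ obtained from Khinchine's inequality and bounded overlap, and a lower bound for $\int_0^1\|f_t\|^q_{L^q_{\mu,\Phi}}\,dt$ obtained by applying Khinchine pointwise under the $\mu$-integral and retaining the dominant term on each $\Delta(z_k,r)$. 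Feeding these into (i), with Jensen's inequality in $t$ to pass from the $q$-average to the $p$-average, produces an inequality of the form
\begin{equation*}
\sum_k\lambda_k^q\,\Upsilon^{\mu,\om}_{\Phi,\Psi,r}(z_k)\,\om(S(z_k))^{(p-q)/p}\lesssim\|I\|^q\Bigl(\sum_k\lambda_k^p\Bigr)^{q/p}
\end{equation*}
valid for every nonnegative sequence $\{\lambda_k\}$. The converse of Hölder, i.e.\ the duality of $\ell^{p/q}$ and $\ell^{p/(p-q)}$ on the positive cone, then forces $\sum_k\Upsilon^{\mu,\om}_{\Phi,\Psi,r}(z_k)^{p/(p-q)}\om(S(z_k))<\infty$, which is (iii).

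I expect the main obstacle to be this necessity step, and within it the bookkeeping of the nonlinear weights $\Psi(|f_t|)$ and $\Phi(|f_t|)$. Unlike the homogeneous Bergman case these do not pass through the random sum or through Khinchine's inequality for free: the relevant heights are $|f_t|\asymp\lambda_k\om(S(z_k))^{-1/p}$, which depend on the free coefficient $\lambda_k$, whereas $\Upsilon^{\mu,\om}_{\Phi,\Psi,r}$ records $\Phi$ and $\Psi$ at the fixed reference height $\tfrac{1}{1-|z_k|}$. Reconciling the two is precisely where the class $\LL$ is indispensable: almost monotonicity together with $\Psi(x)\asymp\Psi(x^2)$ (hence $\Psi(x)\asymp\Psi(x^c)$ for every fixed $c>0$, and likewise for $\Phi$) lets me evaluate $\Psi$ and $\Phi$ at a single reference height on each disk and absorb the change of exponent from $p$ to $q$. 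Verifying these comparisons uniformly in $k$, and checking that the atoms $b_{z_k}$ admit the required two-sided pointwise and $A^p_{\om,\Psi}$-norm bounds in the Zygmund-weighted setting, is where the genuine care lies.
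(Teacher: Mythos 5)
Your architecture coincides with the paper's: Luecking's scheme, with a covering/lattice argument and H\"older's inequality for the sufficiency, and Rademacher randomization, Khinchine's inequality and $\ell^{p/q}$--$\ell^{p/(p-q)}$ duality for the necessity; the atoms you postulate are exactly those supplied by the atomic decomposition of $A^p_W$ with $W(z)=\Psi\left(\frac{1}{1-|z|}\right)\om(z)$, which the paper imports from Pel\'aez--R\"atty\"a--Sierra after identifying $A^p_{\om,\Psi}$ with $A^p_W$. The genuine gap is the one you half-acknowledge but do not close: the comparison of $\Phi(|f(z)|)$ and $\Psi(|f(z)|)$ with $\Phi\left(\frac{1}{1-|z|}\right)$ and $\Psi\left(\frac{1}{1-|z|}\right)$ on the set where $|f|$ is small. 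The properties of $\LL$ only give $\Phi(|f(z)|)\asymp\Phi\left(\frac{1}{1-|z|}\right)$ when $|f(z)|$ is trapped between two fixed positive powers of $\frac{1}{1-|z|}$; the upper bound is the growth estimate \eqref{growthf}, but there is no lower bound near the zeros of $f$, and $\mu$ could a priori concentrate there. Concretely, your sub-mean-value claim $\sup_{\Delta(z_k,r)}|f|^q\Phi(|f|)\lesssim\left(A_k/\om(S(z_k))\right)^{q/p}\Phi\left(\frac{1}{1-|z_k|}\right)\Psi\left(\frac{1}{1-|z_k|}\right)^{-q/p}$ is false as stated: take $\Psi\equiv1$, $\Phi(x)=\left(\log(e+x)\right)^{-1}\in\LL$ and $f$ of constant small modulus $\eta$ near $z_k$; the left-hand side is $\asymp\eta^q$ while the right-hand side is $\asymp\eta^q\left(\log\frac{e}{1-|z_k|}\right)^{-1}\to0$ as $|z_k|\to1$.

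The paper fills this hole with two auxiliary lemmas asserting $\int_\D(1-|z|)^{-\veps}\,d\mu(z)<\infty$ for some $\veps>0$: Lemma~\ref{lemma1} deduces it from condition (iii) by a weighted H\"older argument with an auxiliary doubling weight, and Lemma~\ref{epsilonmulemma} deduces it from condition (i) by testing against functions $f_1,f_2$ with $|f_1(z)|+|f_2(z)|\asymp(1-|z|)^{-\delta}$ supplied by Gr\"ohn--Pel\'aez--R\"atty\"a. With this in hand one splits $\D$ into $E_\delta(f)=\left\{z:|f(z)|<(1-|z|)^{-\delta/q}\right\}$ and its complement: on $E_\delta(f)$ the integrand $|f|^q\Phi(|f|)$ is dominated by $(1-|z|)^{-\delta-\eta}$, which is $\mu$-integrable with small tail, while off $E_\delta(f)$ the $\LL$-comparisons you invoke become legitimate (with separate treatment of the almost increasing and almost decreasing cases). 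Without a substitute for these integrability lemmas, neither your sufficiency estimate nor your necessity step --- where you must replace $\Phi(|f_t|)$ by $\Phi\left(\frac{1}{1-|z|}\right)$ under the $\mu$-integral before applying Khinchine --- goes through. The remainder of your outline, namely the discretization of (iii) and its independence of $r$, the bounded-overlap bookkeeping, and the duality argument at the end, matches the paper and is sound.
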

Note that as $\om \in \DDD$, we may replace $\om(S(z))$  with $\om(\Delta(z,r))$, when $r$ is sufficiently large depending on $\om$. However, this would reduce the generality of our statement as now the choice of $r$ is not restricted. Our characterization can be seen as a generalization of the one given in \cite[Theorem 2]{LRW2021} for the weighted Bergman spaces induced by doubling weights. Argument displayed there also shows that one cannot replace $L^{\frac{p}{p-q}}_{\widetilde{\om}}$ with $L^{\frac{p}{p-q}}_{\om}$ in the condition. 

Let us define the differential operator $D^{(n)}$ as $D^{(n)}(f)=f^{(n)}$. Our second main result is the characterization of when the differential operator $D^{(n)}: A^p_{\om,\Psi} \to L^q_{\mu,\Phi}$ is bounded and compact. 
\begin{theorem}\label{Theorem2}
    Let $0<q<p<\infty$, $n \in \N$, $\Phi,\Psi \in \LL$ and $\om \in \DDD$, and let $\mu$ be a positive Borel measure on $\D$. Then the following statements are equivalent:
    \begin{itemize}
        \item[(i)] $D^{(n)}: A^p_{\om,\Psi} \to L^q_{\mu,\Phi}$ is bounded;
        \item[(ii)] $D^{(n)}: A^p_{\om,\Psi} \to L^q_{\mu,\Phi}$ is compact;
        \item[(iii)] The function 
    \begin{equation*}
        \Upsilon^{n,\mu,\om}_{\Phi,\Psi,r}(z)=
         \frac{\mu(\Delta(z,r))\Phi\left(\frac{1}{1-|z|}\right)}{\om(S(z))\Psi\left(\frac{1}{1-|z|}\right)^{\frac{q}{p}}(1-|z|)^{nq}}, \quad z \in \D,
    \end{equation*}
     belongs to $L^{\frac{p}{p-q}}_{\widetilde{\om}}$ for some (equivalently for all) $r \in (0,1)$.
    \end{itemize}
\end{theorem}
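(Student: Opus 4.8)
The plan is to deduce Theorem~\ref{Theorem2} from Theorem~\ref{Theorem1} by transferring the differentiation operator to an identity operator on a Bergman-Zygmund space with a shifted weight. The guiding principle is the classical fact that, for analytic $f$, differentiating $n$ times is balanced by multiplying the inducing weight by $(1-|z|)^{np}$. Concretely, the first step is to prove the norm equivalence
\begin{equation*}
    \|f\|_{A^p_{\om,\Psi}} \asymp \sum_{k=0}^{n-1}|f^{(k)}(0)| + \|f^{(n)}\|_{A^p_{\om_{[np]},\Psi}}, \quad f \in \mathcal{H}(\D),
\end{equation*}
where $\om_{[np]}(z)=(1-|z|)^{np}\om(z)$. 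Combined with the surjectivity (modulo polynomials of degree $<n$) of $D^{(n)}$ onto $A^p_{\om_{[np]},\Psi}$ via the $n$-fold integration operator, this shows that $D^{(n)}:A^p_{\om,\Psi}\to L^q_{\mu,\Phi}$ is bounded (respectively compact) if and only if the identity $I:A^p_{\om_{[np]},\Psi}\to L^q_{\mu,\Phi}$ is bounded (respectively compact). The equivalence of boundedness and compactness in (i) and (ii) then follows immediately from the equivalence of (i) and (ii) in Theorem~\ref{Theorem1}, since passing to the finite-dimensional polynomial part does not affect compactness.

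Next I would verify that $\om_{[np]}\in\DDD$ whenever $\om\in\DDD$: multiplication by $(1-|z|)^{np}$ preserves both the $\DD$ and the $\Dd$ conditions, as is standard for doubling weights, and $\om_{[np]}$ remains integrable since $np>0$. This lets me apply Theorem~\ref{Theorem1} with $\om$ replaced by $\om_{[np]}$, giving that $I:A^p_{\om_{[np]},\Psi}\to L^q_{\mu,\Phi}$ is bounded if and only if
\begin{equation*}
    \frac{\mu(\Delta(z,r))\Phi\left(\frac{1}{1-|z|}\right)}{\om_{[np]}(S(z))\Psi\left(\frac{1}{1-|z|}\right)^{\frac{q}{p}}} \in L^{\frac{p}{p-q}}_{\widetilde{\om_{[np]}}}.
\end{equation*}

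It then remains to identify this condition with statement (iii) of Theorem~\ref{Theorem2}. The two elementary doubling estimates needed are $\om_{[np]}(S(z)) \asymp (1-|z|)^{np}\om(S(z))$, which holds because $(1-|w|)\asymp(1-|z|)$ on $S(z)$, and $\widehat{\om_{[np]}}(z)\asymp (1-|z|)^{np}\whw(z)$, which is a standard consequence of $\om\in\DD$; the latter yields $\widetilde{\om_{[np]}}(z)\asymp (1-|z|)^{np}\widetilde{\om}(z)$. Substituting these into the integral defining membership in $L^{\frac{p}{p-q}}_{\widetilde{\om_{[np]}}}$ and collecting the powers of $(1-|z|)$ gives an exponent $-\frac{np\,p}{p-q}+np=-\frac{npq}{p-q}$, which is exactly the power produced by the factor $(1-|z|)^{-nq}$ in $\Upsilon^{n,\mu,\om}_{\Phi,\Psi,r}$ integrated against $\widetilde{\om}$, namely $-\frac{nq\,p}{p-q}=-\frac{npq}{p-q}$. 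Thus the two integrands agree up to multiplicative constants, so the conditions coincide and (iii) is equivalent to boundedness. The independence of the choice of $r\in(0,1)$ is inherited directly from Theorem~\ref{Theorem1}.

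The main obstacle is the differentiation norm equivalence of the first step, since the Zygmund factor $\Psi(|f^{(n)}|)$ must be controlled alongside $|f^{(n)}|^p$. The standard $A^p_\om$ argument must be adapted by exploiting that $\Psi\in\LL$ is almost monotonic and slowly varying: the relation $\Psi(x)\asymp\Psi(x^2)$ forces $\Psi$ to vary slowly enough that pointwise estimates of $|f^{(n)}|$ on pseudohyperbolic disks, together with the subharmonicity of $|f|^p\Psi(|f|)$-type quantities, transfer the inducing function essentially unchanged between $f$ and its derivatives. Carrying out this control of $\Psi$ uniformly, and confirming that the integration operator respects it, is where the real work lies; once it is in place, the remaining reduction is the bookkeeping described above.
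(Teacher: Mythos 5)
Your overall strategy is the same as the paper's: reduce $D^{(n)}$ to the identity operator $I: A^p_{\om_{[np]},\Psi}\to L^q_{\mu,\Phi}$, check $\om_{[np]}\in\DDD$, apply Theorem~\ref{Theorem1} with the shifted weight, and convert the resulting condition to (iii) via $\widehat{\om_{[np]}}(z)\asymp(1-|z|)^{np}\whw(z)$; your exponent bookkeeping $-\frac{np\,p}{p-q}+np=-\frac{nq\,p}{p-q}$ matches the paper's computation exactly. The one substantive difference is how the Littlewood--Paley norm equivalence $\|f\|_{A^p_{\om,\Psi}}\asymp\sum_{k<n}|f^{(k)}(0)|+\|f^{(n)}\|_{A^p_{\om_{[np]},\Psi}}$ is obtained: you propose to prove it directly by adapting the classical subharmonicity argument while controlling the factor $\Psi(|f^{(n)}|)$, and you correctly flag this as the real work --- but you leave it undone. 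The paper sidesteps this entirely by invoking the identification $A^p_{\om,\Psi}=A^p_{W}$ with comparable quasinorms, where $W(z)=\Psi\left(\frac{1}{1-|z|}\right)\om(z)\in\DDD$ (Proposition 11 of the cited companion paper), so that the Zygmund factor is absorbed into the weight and the standard Littlewood--Paley formula for $\DDD$ weights applies verbatim; since $W_{[np]}(z)=\Psi\left(\frac{1}{1-|z|}\right)\om_{[np]}(z)$, the equivalence you want follows with no new subharmonicity estimates. You should take this route rather than redoing the hard analysis. Two smaller points: your justification of $\om_{[np]}(S(z))\asymp(1-|z|)^{np}\om(S(z))$ by ``$(1-|w|)\asymp(1-|z|)$ on $S(z)$'' is incorrect as stated, since a Carleson square reaches the boundary and $1-|w|$ is only bounded above by $1-|z|$ there; the asymptotic equality is true but requires the $\Dd$ condition (via $\om(S(z))\asymp(1-|z|)\whw(z)$ and $\widehat{\om_{[np]}}(z)\asymp(1-|z|)^{np}\whw(z)$). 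Likewise, the preservation of $\Dd$ under multiplication by $(1-|z|)^{np}$ is standard but does use the two-sided doubling, not merely monotonicity of the extra factor.
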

We obtain this result using Theorem~\ref{Theorem1} and the well-known Littlewood-Paley estimates for weights in $\DDD$, see \cite{PelRat2020}.

The rest of the paper is organized in the following way. In Section \ref{SecAux}, we show some known results on the doubling weights and functions in the class $\LL$ for the convenience of the reader and prove some necessary auxiliary results. Finally, Section \ref{SecMain} is dedicated to proving Theorems~\ref{Theorem1} and \ref{Theorem2}.

To finish the introduction, we give some additional notation already used. We say that $A\lesssim B$ or $B \gtrsim A$, if $A \leq CB$ for some constant $C=C(\cdot)>0$. If $A \lesssim B \lesssim A$, we denote it by $A \asymp B$. Note that the omitted constant is not shared between different instances and may depend on some fixed parameters.

\section{Auxiliary results}\label{SecAux}
The proof of Theorem \ref{Theorem1} requires knowledge of weights in $\DDD$ and functions in the class $\LL$. Therefore, for the convenience of the reader, we show some needed elementary results and give references to the interested readers. Our first lemma gives helpful characterizations for weights in $\DD$ and $\Dd$. The proof of (i) can be found for example in \cite[Lemma 2.1]{PelSum14}, and for a proof of (ii) see \cite[Lemma B]{PelDela}.
\begin{letterlemma}\label{DhatDcheck}
Let $\om$ be a radial weight. Then the following statements are valid:
\begin{itemize}
    \item[(i)] $\om \in \DD$ if and only if there exist constants $C=C(\om)>0$ and $\beta_0=\beta_0(\om)>0$ such that
		$$
		\widehat{\om}(r)\leq C\left(\frac{1-r}{1-t}\right)^{\beta}\widehat{\om}(t),\quad 0\leq r\leq t<1,
		$$
for all $\beta\geq\beta_0$.
    \item[(ii)] $\om\in\Dd$ if and only if there exist constants $C=C(\omega)>0$ and $\alpha=\alpha(\omega)>0$ such that
	$$
	\widehat{\omega}(t) \leq C\left(\frac{1-t}{1-r}\right)^{\alpha} \widehat{\omega}(r), \quad 0 \leq r \leq t<1.
	$$
\end{itemize}
\end{letterlemma}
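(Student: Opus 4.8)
The plan is to obtain both equivalences by iterating the respective one-step doubling condition and converting the number of iterations into a power of the ratio $(1-r)/(1-t)$. The elementary fact I would exploit is that each defining inequality rescales the distance to the boundary by a fixed factor: the map $r \mapsto \frac{1+r}{2}$ appearing in the definition of $\DD$ satisfies $1 - \frac{1+r}{2} = \frac{1-r}{2}$, while the map $r \mapsto 1 - \frac{1-r}{K}$ appearing in the definition of $\Dd$ satisfies $1 - \left(1 - \frac{1-r}{K}\right) = \frac{1-r}{K}$. Since $\widehat{\om}$ is nonincreasing, iterating each rescaling an appropriate number of times and tracking the multiplicative effect on $\widehat{\om}$ should produce the claimed polynomial bounds.

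For (i), I would first assume $\om \in \DD$ with constant $C$, set $r_0 = r$ and $r_{k+1} = \frac{1+r_k}{2}$ so that $1 - r_k = 2^{-k}(1-r)$, and iterate the defining inequality to get $\widehat{\om}(r) \leq C^k \widehat{\om}(r_k)$. Given $r \leq t$, the idea is to pick $k \approx \log_2 \frac{1-r}{1-t}$ so that $r_k \geq t$; monotonicity of $\widehat{\om}$ then gives $\widehat{\om}(r_k) \leq \widehat{\om}(t)$, and $C^k$ becomes a power $\left(\frac{1-r}{1-t}\right)^{\log_2 C}$, so $\beta_0 = \log_2 C$ works. The converse direction is the easy one: specializing $t = \frac{1+r}{2}$ makes $\frac{1-r}{1-t} = 2$ and collapses the polynomial estimate back to the defining inequality of $\DD$.

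Part (ii) I would handle by the same template with the inequalities reversed. Assuming $\om \in \Dd$ with constants $C, K$, I would iterate $s_{k+1} = 1 - \frac{1-s_k}{K}$ to obtain $1 - s_k = K^{-k}(1-r)$ and $\widehat{\om}(s_k) \leq C^{-k}\widehat{\om}(r)$, then choose $k \approx \log_K \frac{1-r}{1-t}$ so that $s_k \leq t$; the exponent comes out as $\alpha = \log_K C$. For the converse, I would take $t = 1 - \frac{1-r}{K'}$ with $K'$ large enough that $C(K')^{-\alpha} < 1$, which yields exactly the $\Dd$ condition with some constant strictly larger than $1$.

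The step I expect to demand the most care is the bookkeeping in passing between the one-step and the polynomial formulations: the number of iterations $k$ must be rounded to an integer, and I would need to check that this rounding only perturbs the multiplicative constant (costing at most one extra factor of the base $C$) and not the exponent. Relatedly, in (i) I should verify that the single exponent $\beta_0$ may be replaced by any $\beta \geq \beta_0$, which holds because $\frac{1-r}{1-t} \geq 1$ when $r \leq t$. None of this is deep, consistent with this being a known lemma, but keeping the direction of the monotonicity of $\widehat{\om}$ and the roles of the two constants straight is where the argument can most easily go wrong.
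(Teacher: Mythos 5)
Your proposal is correct. The paper gives no proof of this lemma, deferring to \cite[Lemma 2.1]{PelSum14} and \cite[Lemma B]{PelDela}, and your iteration argument (rescaling the boundary distance by a fixed factor, rounding the number of steps to an integer, and converting $C^k$ into the power $(\frac{1-r}{1-t})^{\log_2 C}$ resp.\ $(\frac{1-t}{1-r})^{\log_K C}$, with the easy converses by specializing $t$) is exactly the standard proof found in those references, including the observation that $\beta$ may be increased beyond $\beta_0$ because $\frac{1-r}{1-t}\geq 1$.
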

In addition to properties of weights, we constantly need the following lemmas from \cite[Lemma 3, Proposition 4]{BergZygD}, which yield some important pointwise estimates for functions in the class $\LL$. 
\begin{letterlemma}\label{Lgrowth}
    Let $\Psi \in \LL$. Then there exist constants $c_1=c_1(\Psi)>0$, $C_1=C_1(\Psi)>0$ and $c_2=c_2(\Psi) \in \mathbb{R}$, $C_2=C_2(\Psi) \in \mathbb{R}$ such that
    \begin{equation*}
        c_1\left(\log(e+x)\right)^{c_2} \leq \Psi(x) \leq C_1\left(\log(e+x)\right)^{C_2}, \quad 0\leq x <\infty.
    \end{equation*}
\end{letterlemma}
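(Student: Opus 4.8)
The plan is to trade the multiplicative doubling relation $\Psi(x)\asymp\Psi(x^2)$ for an additive one by passing to the logarithmic variable, and then to convert the resulting geometric growth in the number of iterations into a power of $\log(e+x)$. First I would fix a constant $C\geq 1$ with $C^{-1}\Psi(x^2)\leq\Psi(x)\leq C\Psi(x^2)$ for all $x\geq 0$, denote by $C_0$ the almost monotonicity constant of $\Psi$, and set $g(u)=\Psi(e^u)$ for $u\geq 0$. Since $\exp$ is increasing, $g$ inherits almost monotonicity from $\Psi$, and the hypothesis $\Psi(x)\asymp\Psi(x^2)$ becomes the additive doubling $C^{-1}g(u)\leq g(2u)\leq Cg(u)$.

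The core of the argument is to bound $g(u)$ for $u\geq 1$ between two powers of $u$. Given $u\geq 1$, I would set $n=\lfloor\log_2 u\rfloor$ so that $v:=u/2^n\in[1,2)$, and iterate the doubling relation $n$ times to get $C^{-n}g(v)\leq g(u)\leq C^{n}g(v)$. Almost monotonicity, in whichever direction it holds, bounds $g$ on the compact interval $[1,2]$ between two positive constants $m_1\leq g\leq M_1$; here positivity is guaranteed by $g(1)=\Psi(e)>0$ and $g(2)=\Psi(e^2)>0$. Since $n\leq\log_2 u$ and $C\geq 1$, one has $C^{n}\leq u^{\log_2 C}$ and $C^{-n}\geq u^{-\log_2 C}$, whence $m_1 u^{-\log_2 C}\leq g(u)\leq M_1 u^{\log_2 C}$ for all $u\geq 1$.

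Translating back through $u=\log x$ gives $\Psi(x)\asymp(\log x)^{\pm\log_2 C}$ for $x\geq e$, and since $\log x\leq\log(e+x)\leq(1+\log 2)\log x$ on $[e,\infty)$, the same two-sided bound holds with $\log(e+x)$ in place of $\log x$, the exponents being $c_2=-\log_2 C$ and $C_2=\log_2 C$. To finish I would cover the range $0\leq x\leq e$ separately: there $\log(e+x)$ lies between $1$ and $\log(2e)$, while almost monotonicity together with $\Psi(0)>0$ and $\Psi(e)>0$ traps $\Psi$ between positive constants, so the desired power bound holds on this range for any exponent, in particular for the two already fixed. Enlarging $C_1$ and shrinking $c_1$ to absorb both ranges then yields the stated inequalities on all of $[0,\infty)$.

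I do not expect a deep obstacle here; the one genuinely decisive idea is the logarithmic substitution, which linearises the scaling $x\mapsto x^2$ into $u\mapsto 2u$ and thereby exposes the power law. The part requiring the most care is the bookkeeping that makes the estimate uniform over the whole half-line: one must glue the logarithmic growth valid for $x\geq e$ to the merely bounded behaviour near the origin, and one must verify that almost monotonicity suffices to control $\Psi$ on the base interval $[1,2]$ irrespective of whether it is almost increasing or almost decreasing. Once these points are settled, the remaining estimates are routine.
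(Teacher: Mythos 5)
Your argument is correct: the logarithmic substitution $g(u)=\Psi(e^u)$, the dyadic iteration of the doubling relation down to the base interval $[1,2]$, the two-sided trap by positive constants via almost monotonicity there and on $[0,e]$, and the comparison $\log x\asymp\log(e+x)$ for $x\ge e$ together give exactly the claimed bounds with $C_2=-c_2=\log_2 C$. Note that the paper does not prove this lemma at all; it is quoted as Lemma~\ref{Lgrowth} from \cite[Lemma 3]{BergZygD}, so there is no internal proof to compare against, but your dyadic-iteration argument is the standard and complete derivation of this fact and contains no gaps.
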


\begin{letterlemma}\label{Lproperties}
   Let $0<\alpha<\infty$ and $\Phi, \Psi\in\LL$. Then the following statements hold:
   \begin{itemize}
       \item [(i)] $\Psi(x)\asymp \Psi(y)$, when $x \asymp y$;
       \item[(ii)]  $\Psi(x) \asymp \Psi(x^{\alpha})$ for all $0\leq x<\infty$;
       \item[(iii)] $\Psi(x/\Phi(x)) \asymp \Psi(x)$ for all $0\leq x<\infty$.
   \end{itemize}
\end{letterlemma}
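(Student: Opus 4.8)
The two structural features of the class $\LL$ at our disposal are the almost monotonicity of $\Psi$ and the self-similarity relation $\Psi(x)\asymp\Psi(x^2)$, and the plan is to leverage these together with the two-sided polylogarithmic bounds of Lemma~\ref{Lgrowth}. Throughout I would assume without loss of generality that $\Psi$ is almost increasing, the almost decreasing case being entirely symmetric. Before addressing (i)--(iii) I would isolate two elementary facts. First, since $\log(e+x)$ ranges over the compact interval $[1,\log(e+M)]\subset(0,\infty)$ as $x$ runs through $[0,M]$, Lemma~\ref{Lgrowth} shows that $\Psi$ is bounded above and below by positive constants on every bounded interval $[0,M]$; call this the boundedness fact. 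Second, iterating $\Psi(x)\asymp\Psi(x^2)$ a fixed number of times, and using the substitution $x\mapsto x^{1/2}$ to go the other way, yields $\Psi(x)\asymp\Psi(x^{2^k})$ for every fixed $k\in\mathbb{Z}$, with comparison constants depending only on $k$ and $\Psi$; call this the squaring fact.

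For (ii) I would split according to the size of $x$. When $0\le x\le 1$ both $x$ and $x^{\alpha}$ lie in $[0,1]$, so the boundedness fact gives $\Psi(x)\asymp 1\asymp\Psi(x^{\alpha})$. When $x\ge 1$ I would pick an integer $k$ with $2^{-k}\le\alpha\le 2^{k}$, so that $x^{2^{-k}}\le x^{\alpha}\le x^{2^{k}}$; almost monotonicity then sandwiches $\Psi(x^{\alpha})$ between $\Psi(x^{2^{-k}})$ and $\Psi(x^{2^{k}})$, and the squaring fact identifies both of these with $\Psi(x)$ up to constants. For (i), writing $x\asymp y$ as $x=\lambda y$ with $\lambda$ bounded and bounded away from $0$, I would reduce by symmetry to $\lambda\ge 1$, so that $y\le x\le C_0 y$. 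If $x\le C_0^2$ then $y$ too is bounded and the boundedness fact finishes it; otherwise $y\ge x/C_0>C_0$, so $x\le C_0 y\le y^2$, whence almost monotonicity and the squaring fact give $\Psi(y)\lesssim\Psi(x)\lesssim\Psi(y^2)\asymp\Psi(y)$.

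Part (iii) is where the polylogarithmic bounds of Lemma~\ref{Lgrowth} do the real work, and I expect it to be the main obstacle since it couples the two functions $\Phi$ and $\Psi$. The idea is to show that, for all sufficiently large $x$, the argument $x/\Phi(x)$ is trapped between $x^{1/2}$ and $x^2$; once this is established, almost monotonicity and the squaring fact give $\Psi(x/\Phi(x))\asymp\Psi(x)$ exactly as in the large-$x$ part of (ii). To get the trapping I would feed the two-sided estimate $c_1(\log(e+x))^{c_2}\le\Phi(x)\le C_1(\log(e+x))^{C_2}$ from Lemma~\ref{Lgrowth} into the quotient: since any power of $\log(e+x)$ is eventually dominated by $x^{1/2}$ and eventually dominates $x^{-1/2}$, one obtains $x^{1/2}\lesssim x/\Phi(x)\lesssim x^2$ for large $x$, regardless of the signs of $c_2$ and $C_2$. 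For the remaining bounded range of $x$, the boundedness fact applied to $\Phi$ keeps $\Phi(x)$ bounded away from $0$, so $x/\Phi(x)$ stays bounded and both sides are comparable to $1$. The care needed here is purely in checking the polylogarithm-versus-power comparisons in all four sign cases and in tracking the threshold beyond which the trapping holds; none of this is deep, but it is the part most likely to hide a slip.
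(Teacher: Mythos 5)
Your argument is correct, but note that the paper does not prove Lemma~\ref{Lproperties} at all: it is imported verbatim from \cite[Lemma 3, Proposition 4]{BergZygD}, so there is no in-paper proof to compare against. Your self-contained derivation is sound and uses exactly the resources the paper makes available, namely almost monotonicity, the relation $\Psi(x)\asymp\Psi(x^2)$, and the polylogarithmic bounds of Lemma~\ref{Lgrowth}. The two preliminary facts are both valid (the boundedness on $[0,M]$ in fact already follows from almost monotonicity alone, since $\Psi(0)/C\leq\Psi(x)\leq C\Psi(M)$ there, so no circularity with Lemma~\ref{Lgrowth} is even possible), and the case splits in (i)--(iii) all close: in (i) the key inequality $y\leq x\leq y^2$ for $x>C_0^2$ is what lets the squaring relation absorb the multiplicative constant; in (ii) the sandwich $x^{2^{-k}}\leq x^{\alpha}\leq x^{2^{k}}$ for $x\geq 1$ works for either direction of almost monotonicity; and in (iii) the trapping $x^{1/2}\leq x/\Phi(x)\leq x^2$ for large $x$ holds for all sign combinations of the exponents in Lemma~\ref{Lgrowth} because $(\log(e+x))^{\gamma}$ is eventually squeezed between $x^{-1/2}$ and $x^{1/2}$ for any fixed $\gamma\in\mathbb{R}$. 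This is almost certainly the same elementary route taken in the cited source, so I see nothing to object to beyond the bookkeeping you already flag.
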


First, we need some auxiliary results, which show that the function $z \mapsto (1-|z|)^{-\veps}$ belongs to $L^1_{\mu}$ for a sufficiently small $\veps>0$ under certain conditions on $\mu$.
\begin{lemma}\label{lemma1}
Let $0<q<p<\infty$, $\Phi,\Psi \in \LL$ and $\om \in \DDD$, and let $\mu$ be a positive Borel measure on $\D$. Assume that there exists a constant $r \in (0,1)$ such that
\begin{equation}\label{bndcondition}
    \int_{\D}\left(\frac{\mu(\Delta(z,r))}{\om(S(z))}\right)^{\frac{p}{p-q}}\left(\frac{\Phi\left(\frac{1}{1-|z|}\right)}{\Psi\left(\frac{1}{1-|z|}\right)^{\frac{q}{p}}}\right)^{\frac{p}{p-q}}\frac{\whw(z)}{1-|z|}\,dA(z) < \infty.
\end{equation}
Then, there exists a constant $\veps=\veps(\mu,\om,\Phi,\Psi,p,q)>0$ such that
\begin{equation*}
    \int_{\D}\frac{d\mu(z)}{(1-|z|)^{\veps}}<\infty.
\end{equation*}
\end{lemma}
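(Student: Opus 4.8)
The plan is to discretise the integral along a pseudohyperbolic lattice and then split it with Hölder's inequality, using the conjugate exponents $\frac{p}{p-q}$ and $\frac{p}{q}$ against the weights $\om(S(a_j))$, so that one factor becomes the hypothesis \eqref{bndcondition} and the other can be summed directly. The smallness of $\veps$ will be forced only by this second factor.

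Fix the $r$ furnished by \eqref{bndcondition}, set $\rho=r/2$ and $r'=\frac{r-\rho}{1-r\rho}$, and choose a separated sequence $\{a_j\}\subset\D$ so that the disks $\Delta(a_j,\rho)$ cover $\D$ with uniformly bounded overlap $N$. Two elementary geometric facts drive the argument. First, $\rho\le r'$ gives $\mu(\Delta(a_j,\rho))\le\mu(\Delta(a_j,r'))$, while the triangle inequality for $\rho$ gives $\Delta(a_j,r')\subset\Delta(z,r)$ for every $z\in\Delta(a_j,\rho)$, hence $\mu(\Delta(a_j,r'))\le\mu(\Delta(z,r))$; this is what lets me pass between the lattice and the hypothesis without ever changing the radius inside $\mu(\Delta(\cdot,r))$. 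Second, on each $\Delta(a_j,\rho)$ the quantities $1-|z|$, $\whw(z)$, $\om(S(z))$ and the values $\Phi(\tfrac{1}{1-|z|})$, $\Psi(\tfrac{1}{1-|z|})$ are all comparable to their values at $a_j$ (here $\om\in\DD$ controls $\whw$, Lemma~\ref{Lproperties}(i) controls $\Phi,\Psi$), and for the radial weight $\om\in\DDD$ one has $\om(S(z))\asymp\whw(z)(1-|z|)$. Writing $\gamma(z)=\Phi(\tfrac{1}{1-|z|})\Psi(\tfrac{1}{1-|z|})^{-q/p}$ and using $1-|z|\asymp1-|a_j|$ on the cover, the target reduces to
\begin{equation*}
\int_\D\frac{d\mu(z)}{(1-|z|)^{\veps}}\lesssim\sum_j\frac{\mu(\Delta(a_j,r'))}{(1-|a_j|)^{\veps}}.
\end{equation*}
Applying Hölder with exponents $\frac{p}{p-q},\frac{p}{q}$ against the weight $\om(S(a_j))$ bounds the right-hand side by $\mathrm{I}^{\frac{p-q}{p}}\,\mathrm{II}^{\frac{q}{p}}$, where
\begin{equation*}
\mathrm I=\sum_j\Big(\tfrac{\mu(\Delta(a_j,r'))}{\om(S(a_j))}\Big)^{\frac{p}{p-q}}\gamma(a_j)^{\frac{p}{p-q}}\om(S(a_j)),\qquad \mathrm{II}=\sum_j(1-|a_j|)^{-\veps p/q}\gamma(a_j)^{-p/q}\om(S(a_j)).
\end{equation*}

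The sum $\mathrm I$ is finite because it is dominated by \eqref{bndcondition}: using $\mu(\Delta(a_j,r'))\le\mu(\Delta(z,r))$ together with the comparabilities above and $\om(S(a_j))\asymp\whw(a_j)(1-|a_j|)$, $dA(\Delta(a_j,\rho))\asymp(1-|a_j|)^2$, each summand is $\lesssim\int_{\Delta(a_j,\rho)}\big(\tfrac{\mu(\Delta(z,r))}{\om(S(z))}\big)^{\frac{p}{p-q}}\gamma(z)^{\frac{p}{p-q}}\tfrac{\whw(z)}{1-|z|}\,dA(z)$, and summing over $j$ with overlap $\le N$ returns a constant multiple of the finite integral in \eqref{bndcondition}. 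The sum $\mathrm{II}$ is where the doubling hypothesis and the smallness of $\veps$ enter. Grouping the lattice points into generations $1-|a_j|\asymp2^{-n}$, each generation contains $\asymp2^n$ points with $\om(S(a_j))\asymp\whw(1-2^{-n})2^{-n}$, so the generation-$n$ contribution is
\begin{equation*}
\asymp\; 2^{n\veps p/q}\,\frac{\Psi(2^n)}{\Phi(2^n)^{p/q}}\,\whw(1-2^{-n}).
\end{equation*}
By Lemma~\ref{DhatDcheck}(ii) the $\Dd$-property yields $\whw(1-2^{-n})\lesssim 2^{-n\alpha}$ for some $\alpha>0$, while Lemma~\ref{Lgrowth} bounds $\Psi(2^n)$ and $\Phi(2^n)^{-p/q}$ by powers of $\log(e+2^n)\asymp n$. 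Hence the generation-$n$ term is $\lesssim n^{M}2^{-n(\alpha-\veps p/q)}$ for some $M\in\R$, which is summable as soon as $\veps<\alpha q/p$. Choosing any such $\veps$ makes $\mathrm{II}<\infty$ and, combined with $\mathrm I<\infty$, finishes the proof.

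The main obstacle is the estimate $\mathrm{II}<\infty$. The decisive point is to split with exactly the exponents $\frac{p}{p-q},\frac{p}{q}$ against the measure $\om(S(a_j))$, so that the dual factor $\mathrm{II}$ isolates the tail integral $\whw$; this decays like a genuine power of $1-|a_j|$ precisely because $\om\in\Dd$, and that power is what absorbs the blow-up $(1-|z|)^{-\veps}$ for small $\veps$. The almost monotone functions $\Phi,\Psi$ cause no difficulty, since by Lemma~\ref{Lgrowth} they contribute only logarithmic (hence sub-polynomial in the generation index) factors that do not affect convergence. The remaining ingredients — the bounded-overlap cover, the comparabilities on each $\Delta(a_j,\rho)$, and the passage between the radii $\rho,r',r$ via the two facts $\mu(\Delta(a_j,\rho))\le\mu(\Delta(a_j,r'))$ and $\Delta(a_j,r')\subset\Delta(z,r)$ — are routine.
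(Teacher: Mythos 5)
Your proof is correct. The engine is the same as in the paper's argument: H\"older's inequality with the conjugate exponents $\frac{p}{p-q}$ and $\frac{p}{q}$, splitting off the hypothesis \eqref{bndcondition} and leaving a dual factor that is, up to constants, $\sum_n 2^{n\veps p/q}\Psi(2^n)\Phi(2^n)^{-p/q}\whw(1-2^{-n})$, whose convergence for small $\veps$ is exactly where $\om\in\Dd$ and the logarithmic growth of $\LL$-functions enter. The implementation, however, is genuinely different. The paper works entirely in the continuous setting: it writes $\int_\D(1-|z|)^{-\veps}\,d\mu(z)\asymp\int_\D\mu(\Delta(\z,r))(1-|\z|)^{-2-\veps}\,dA(\z)$ by Fubini, introduces the auxiliary weight $W=\om\,\Psi(\tfrac{1}{1-|\cdot|})\Phi(\tfrac{1}{1-|\cdot|})^{-p/q}$, verifies $W\in\DDD$ and $\widetilde W\in\DDD$ via \cite[Proposition 11]{BergZygD} and \cite[Proof of Proposition 5]{PRS1}, and then disposes of the dual factor by citing \cite[Lemma 2]{PR2023}. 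You instead discretize onto a separated lattice with bounded overlap and estimate the dual factor by hand through the dyadic-generation computation, using only Lemma~\ref{DhatDcheck}(ii) and Lemma~\ref{Lgrowth}. What your route buys is self-containedness -- no auxiliary weight $W$ and no appeal to the $\DDD$-stability results or to \cite{PR2023} -- at the price of the extra (routine but nontrivial) bookkeeping with the radii $\rho$, $r'=\frac{r-\rho}{1-r\rho}$, $r$ and the bounded-overlap covering needed to compare the discretized factor $\mathrm I$ with the continuous integral in \eqref{bndcondition}; in the paper's version that first factor is literally the hypothesis after unwinding $W$, with no lattice needed.
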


\begin{proof}
Let $W(z)=\om(z)\Psi\left(\frac{1}{1-|z|}\right)\Phi\left(\frac{1}{1-|z|}\right)^{-\frac{p}{q}}$. By applying proof of \cite[Proposition 11]{BergZygD} twice, we see that $W \in \DDD$. We also have $\widehat{W}(z)\asymp \whw(z)\Psi\left(\frac{1}{1-|z|}\right)\Phi\left(\frac{1}{1-|z|}\right)^{-\frac{p}{q}}$ by \cite[afternote of Lemma 10]{BergZygD}. As $W \in \DDD$, it is known that $\widetilde{W} \in \DDD$, see \cite[Proof of Proposition 5]{PRS1}. Therefore, there exists a constant $\veps_0=\veps_0(\om,\Psi,\Phi,p,q)>0$ such that $\widetilde{W}_{\left[\frac{-\veps p}{q}\right]} \in L^1$ for every $0<\veps<\veps_0$ by \cite[Lemma 2]{PR2023}, where we have applied the lemma to the weight $\widetilde{W}$ and the auxiliary weight $\nu$ defined by $\nu(z)=\left(1-|z|\right)^{\frac{p}{q}-1}$. Then Fubini's theorem and H\"older's inequality yield
\begin{equation*}
    \begin{split}
        \int_{\D}\frac{d\mu(z)}{(1-|z|)^{\veps}}&\asymp \int_{\D}\frac{\mu(\Delta(\z,r))}{(1-|\z|)^{2+\veps}}\,dA(\z)
        =\int_{\D}\frac{\mu(\Delta(\z,r))}{(1-|\z|)^{2+\veps}}\frac{\widehat{W}(\z)}{\widehat{W}(\z)}\,dA(\z)\\
        &\asymp \int_{\D}\frac{\mu(\Delta(\z,r))}{W(S(\z))}\frac{\widetilde{W}(\z)}{(1-|\z|)^{\veps}}\,dA(\z)\\
        &\leq \left(\int_{\D}\left(\frac{\mu(\Delta(\z,r))}{W(S(\z))}\right)^{\frac{p}{p-q}}\widetilde{W}(\z)\,dA(\z)\right)^{\frac{p-q}{p}}
        \left(\int_{\D}\frac{\widetilde{W}(\z)}{(1-|\z|)^{\frac{\veps p}{q}}}\,dA(\z)\right)^{\frac{q}{p}}\\
        &\lesssim \left(\int_{\D}\left(\frac{\mu(\Delta(\z,r))}{W(S(\z))}\right)^{\frac{p}{p-q}}\widetilde{W}(\z)\,dA(\z)\right)^{\frac{p-q}{p}}.
    \end{split}
\end{equation*}
By the definition of $W$ and the assumption \eqref{bndcondition}, we have
\begin{equation}
\begin{split}
&\quad \int_{\D}\left(\frac{\mu(\Delta(\z,r))}{W(S(\z))}\right)^{\frac{p}{p-q}}\widetilde{W}(\z)\,dA(\z)\\ &\asymp \int_{\D}\left(\frac{\mu(\Delta(\z,r))}{\om(S(\z))}\right)^{\frac{p}{p-q}}\left(\frac{\Phi\left(\frac{1}{1-|\z|}\right)}{\Psi\left(\frac{1}{1-|\z|}\right)^{\frac{q}{p}}}\right)^{\frac{p}{p-q}}\frac{\whw(\z)}{1-|\z|}\,dA(\z) < \infty,
\end{split}
\end{equation}
which finishes the proof.
\end{proof}

In the proof of the necessity of our condition \eqref{bndcondition}, we once again need an auxiliary lemma similar to that of Lemma~\ref{lemma1}. The result is certainly not surprising given the previous one. However, we need some specific test functions to obtain it.
\begin{lemma}\label{epsilonmulemma}
    Let $0<q<p<\infty$, $\Phi,\Psi \in \LL$ and $\om \in \DDD$, and let $\mu$ be a positive Borel measure on $\D$. If $\mu$ is a $(q,\Phi)$-Carleson measure for $A^p_{\om,\Psi}$, then there exists a constant $\veps=\veps(\mu,\om,\Phi,\Psi,p,q)>0$ such that
\begin{equation*}
    \int_{\D}\frac{d\mu(z)}{(1-|z|)^{\veps}}<\infty.
\end{equation*}
\end{lemma}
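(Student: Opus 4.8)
The plan is to exploit that $q<p$ through a spread-out test function, because a single atom concentrated near a point $a$ only yields the estimate
$$\mu(\Delta(a,r))\lesssim\om(S(a))^{q/p}\,\Psi\!\left(\tfrac{1}{1-|a|}\right)^{q/p}\Phi\!\left(\tfrac{1}{1-|a|}\right)^{-1},$$
whose exponent $q/p$ on $\om(S(a))$ (rather than $1$) is too weak to force any boundary decay of $\mu$ when the tail $\whw$ decays slowly. First I would fix a separated lattice $\{z_j\}\subset\D$ whose pseudohyperbolic disks $\Delta(z_j,\delta)$ have finite overlap and cover $\D$, and take the normalized Bergman--Zygmund atoms
$$f_{z_j}(z)=\left(\frac{1-|z_j|^2}{1-\overline{z_j}z}\right)^{\gamma}\left(\om(S(z_j))\,\Psi\!\left(\tfrac{1}{1-|z_j|}\right)\right)^{-1/p},$$
with $\gamma=\gamma(\om,p)$ large, which (as in the $p\le q$ theory of \cite{BergZygD}) satisfy $\|f_{z_j}\|_{A^p_{\om,\Psi}}\asymp1$ and $|f_{z_j}(z)|\asymp(\om(S(z_j))\Psi(\frac{1}{1-|z_j|}))^{-1/p}$ for $z\in\Delta(z_j,\delta)$; the $\Psi$-factor in the normalization is dictated by Lemmas~\ref{Lgrowth} and \ref{Lproperties}, which guarantee $\Psi(|f_{z_j}|)\asymp\Psi(\frac{1}{1-|z_j|})$ on the support.

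Next, decompose $\D$ into the dyadic annuli $E_n=\{z:1-2^{-n}\le|z|<1-2^{-n-1}\}$ and, for each fixed $n$, test the boundedness of $I$ on the finite sum $f^{(n)}=\sum_{z_j\in E_n}f_{z_j}$. For $\gamma$ large the atoms are diagonally dominant, so $|f^{(n)}(z)|\asymp(\om(S(z_j))\Psi(\frac{1}{1-|z_j|}))^{-1/p}$ on each $\Delta(z_j,\delta)$ with $z_j\in E_n$, while the synthesis estimate gives $\|f^{(n)}\|_{A^p_{\om,\Psi}}^p\lesssim\#\{z_j\in E_n\}\asymp2^n$. Writing $\whw_n=\whw(1-2^{-n})$ and using $\om(S(z_j))\asymp2^{-n}\whw_n$ together with the $\LL$-regularity of $\Phi$ (so that $\Phi(|f^{(n)}|)\asymp\Phi(2^n)$ on the supports), the lower bound
$$\int_{\D}|f^{(n)}|^q\Phi(|f^{(n)}|)\,d\mu\gtrsim\sum_{z_j\in E_n}\mu(\Delta(z_j,\delta))\,\frac{\Phi(2^n)}{(2^{-n}\whw_n\,\Psi(2^n))^{q/p}}\gtrsim\mu(E_n)\,\frac{\Phi(2^n)}{(2^{-n}\whw_n\,\Psi(2^n))^{q/p}},$$
where the last step uses that the disks cover $E_n$, combined with the Carleson bound $\int_{\D}|f^{(n)}|^q\Phi(|f^{(n)}|)\,d\mu\lesssim\|f^{(n)}\|_{A^p_{\om,\Psi}}^q\lesssim2^{nq/p}$, yields $\mu(E_n)\lesssim\whw_n^{q/p}\,\Psi(2^n)^{q/p}\Phi(2^n)^{-1}$.

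Finally I would convert this annular bound into the claimed integrability. By Lemma~\ref{DhatDcheck}(ii) there is $\alpha=\alpha(\om)>0$ with $\whw_n\lesssim2^{-n\alpha}$, and by Lemma~\ref{Lgrowth} the factors $\Psi(2^n)^{q/p}$ and $\Phi(2^n)^{-1}$ are of polylogarithmic (hence subpolynomial) order in $2^n$; thus $\mu(E_n)\lesssim2^{-n\alpha q/p}(1+n)^{C}$, and since $\int_{\D}(1-|z|)^{-\veps}\,d\mu\asymp\sum_n2^{n\veps}\mu(E_n)$ this series converges for every $0<\veps<\alpha q/p$, which proves the lemma. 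The hard part will be the two weighted estimates for $f^{(n)}$ in the second paragraph: verifying the diagonal-dominant lower bound and the synthesis upper bound $\|f^{(n)}\|_{A^p_{\om,\Psi}}^p\lesssim2^n$ in the Zygmund setting, where $\|\cdot\|_{A^p_{\om,\Psi}}$ is only a quasi-norm and $\Psi$ is not homogeneous. The key is that $\Psi,\Phi\in\LL$ are almost monotone and satisfy $\Psi(x)\asymp\Psi(x^2)$, so their values at $|f^{(n)}(z)|$ and at $\frac{1}{1-|z_j|}$ stay comparable by Lemmas~\ref{Lgrowth} and \ref{Lproperties}, letting the $\Psi$-weights cancel against the normalization exactly as in the unweighted Bergman case.
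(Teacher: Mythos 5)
Your overall strategy (a spread-out family of test functions rather than a single atom) is sound, and your final bookkeeping with the dyadic annuli, Lemma~\ref{DhatDcheck}(ii) and Lemma~\ref{Lgrowth} is correct: once the bound $\mu(E_n)\lesssim \whw_n^{q/p}\Psi(2^n)^{q/p}\Phi(2^n)^{-1}$ is in hand, the lemma follows. The problem is the step that produces this bound. Your claim that ``for $\gamma$ large the atoms are diagonally dominant'' is not justified and, as stated, fails: if the lattice $\{z_j\}$ is $\delta'$-separated and the disks $\Delta(z_j,\delta)$ are to cover $E_n$, then necessarily $\delta\geq$ the covering radius, so a point $z\in\Delta(z_k,\delta)$ can lie arbitrarily close to a neighbouring node $z_j$, and the individual off-diagonal term $\bigl(\tfrac{1-|z_j|^2}{1-\overline{z_j}z}\bigr)^{\gamma}$ then has modulus comparable to $1$ no matter how large $\gamma$ is; the lower bound $|f^{(n)}|\gtrsim(\om(S(z_k))\Psi)^{-1/p}$ on $\Delta(z_k,\delta)$ therefore does not follow. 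This is precisely the obstruction that the Rademacher/Khinchine device in the proof of Theorem~\ref{Theorem1} is designed to bypass, and note that you cannot simply import that device here: in the almost increasing case the reduction \eqref{normestimate+1}, which is needed before Khinchine can be applied, itself invokes the present lemma, so you would be running in a circle. A second, smaller gap: you use the inequality $\int|f^{(n)}|^q\Phi(|f^{(n)}|)\,d\mu\lesssim\|f^{(n)}\|_{A^p_{\om,\Psi}}^q$ for test functions whose quasinorms grow like $2^{n/p}$, but ``bounded'' here only means that bounded sets are mapped to bounded sets, and $\|\cdot\|_{A^p_{\om,\Psi}}$ is not homogeneous; you would have to normalize $f^{(n)}$ and track the resulting distortion of the $\Psi$- and $\Phi$-factors via Lemma~\ref{Lproperties}, which is doable (the borderline case is excluded because $\om\in\Dd$ forces $\whw_n\lesssim 2^{-n\alpha}$) but is not the routine cancellation you describe.

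For comparison, the paper's proof avoids lattices entirely. By \cite[Theorem 1]{GPR} there exist $f_1,f_2\in\H(\D)$ with $|f_1(z)|+|f_2(z)|\asymp(1-|z|)^{-\delta}$; since $\om\in\DDD$ gives $\om_{[-\delta_0]}\in L^1$ for some $\delta_0>0$, both functions lie in $A^p_{\om,\Psi}$ once $p\delta+\eta<\delta_0$ (with $\eta$ absorbing the logarithmic growth of $\Psi$ from Lemma~\ref{Lgrowth}), and applying the Carleson hypothesis to $f_1$ and $f_2$ and summing yields $\int_{\D}(1-|z|)^{-(q\delta-\gamma)}\,d\mu<\infty$ directly. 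If you want to salvage your argument you would need to either construct the lattice so that genuine pointwise domination holds (e.g.\ split it into finitely many well-separated sublattices and test each separately), or switch to the two prescribed-modulus functions as above, which is both shorter and free of the quasinorm-homogeneity issue.
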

\begin{proof}
    As $\om \in \DDD$ by the hypothesis, there exists a constant $\delta_0=\delta_0(\om)>0$ such that $\om_{[-\delta]} \in L^1$ for every $0<\delta<\delta_0$ by \cite[Lemma 2]{PR2023}. Further, for every $\delta>0$ there exist functions $f_1,f_2 \in \mathcal{H}(\D)$ such that
    \begin{equation*}
        |f_1(z)|+|f_2(z)| \asymp \frac{1}{(1-|z|)^{\delta}}, \quad z \in \D,
    \end{equation*}
    by \cite[Theorem 1]{GPR}. Here, the interested reader should also see \cite{AD}, \cite{Ramey} and the references therein. Fix $\delta>0$ such that $\delta<\frac{\delta_0}{p}$. Then, by the almost monotonicity of $\Psi$ and Lemma~\ref{Lproperties}(i), we have
    \begin{equation*}
    \begin{split}
                \int_{\D}|f_j(z)|^p\Psi(|f_j(z)|)\om(z)\,dA(z) &\lesssim \int_{\D}\frac{\max\left\{1,\Psi\left(\frac{1}{1-|z|}\right)\right\}}{(1-|z|)^{p\delta}}\om(z)\,dA(z)\\
                &\lesssim\int_{\D}\frac{\om(z)}{(1-|z|)^{p\delta+\eta}}\,dA(z)<\infty, \quad j=1,2,
    \end{split}
    \end{equation*}
    where $\eta>0$ comes from the growth estimate of functions in $\LL$ by Lemma~\ref{Lgrowth} and can be chosen small enough such that $\eta<\delta_0-p\delta$. Therefore $f_1,f_2 \in A^p_{\om,\Psi}$ and we can use the assumption that $\mu$ is a $(q,\Phi)$-Carleson measure, which yields
    \begin{equation*}
        \begin{split}
            \infty &> \int_{\D}|f_j(z)|^q\Phi\left(|f_j(z)|\right)\,d\mu(z)\gtrsim\int_{\D}|f_j(z)|^q\min\left\{1,\Phi\left(\frac{1}{1-|z|}\right)\right\}\,d\mu(z), \quad j=1,2.
        \end{split}
    \end{equation*}
    Here we have used the almost monotonicity of functions in $\LL$ and Lemma~\ref{Lproperties}(i)-(ii). Therefore, we have
    \begin{equation*}
        \begin{split}
            \infty&>\int_{\D}\left(|f_1(z)|^q+|f_2(z)|^q\right)\min\left\{1,\Phi\left(\frac{1}{1-|z|}\right)\right\}\,d\mu(z)\\
            &\gtrsim \int_{\D}\frac{d\mu(z)}{(1-|z|)^{q\delta-\gamma}}.
        \end{split}
    \end{equation*}
    Here $\gamma>0$ comes from the growth estimate of $\Phi \in \LL$ in Lemma~\ref{Lgrowth} and can be chosen such that $\gamma<q\delta$. Thus, we conclude that
    \begin{equation*}
        \int_{\D}\frac{d\mu(z)}{(1-|z|)^{q\delta-\gamma}}<\infty,
    \end{equation*}
    which proves the claim.
\end{proof}
For the proofs of our main results, we need the following growth estimate for functions in $A^p_{\om,\Psi}$:
    \begin{equation}\label{growthf}
        |f(z)|^p \lesssim \frac{1}{\om(S(z))\Psi\left(\frac{1}{\om(S(z))}\right)}, \quad  z \in \D, 
    \end{equation}
    see \cite[Lemma 12]{BergZygD} for the proof. Note that the omitted constant depends on the quasinorm $\|f\|_{A^p_{\om,\Psi}}$. We also note that there exists a constant $\beta=\beta(\om)>0$ such that $(1-|z|)^{1+\beta}\lesssim\om(S(z)) \lesssim (1-|z|)$ for every $z \in \D$ by Lemma~\ref{DhatDcheck}. 
\section{Main results}\label{SecMain}
Using the previous results, we first show that \eqref{bndcondition} is a sufficient condition for the compactness of $I: A^p_{\om,\Psi} \to L^q_{\mu,\Phi}$. 
\begin{proposition}\label{3imp1}
    Let $0<q<p<\infty$, $\Phi,\Psi \in \LL$ and $\om \in \DDD$, and let $\mu$ be a positive Borel measure on $\D$. Then $I: A^p_{\om,\Psi} \to L^q_{\mu,\Phi}$ is compact if there exists a constant $r \in (0,1)$ such that 
    \begin{equation}\label{assump}
        \int_{\D}\left(\frac{\mu(\Delta(z,r))}{\om(S(z))}\right)^{\frac{p}{p-q}}\left(\frac{\Phi\left(\frac{1}{1-|z|}\right)}{\Psi\left(\frac{1}{1-|z|}\right)^{\frac{q}{p}}}\right)^{\frac{p}{p-q}}\frac{\whw(z)}{1-|z|}\,dA(z) < \infty.
    \end{equation}  
\end{proposition}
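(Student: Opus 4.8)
The plan is to reduce the whole statement to a single uniform upper bound for $\int_{\D}|f|^q\Phi(|f|)\,d\mu$ over the unit ball of $A^p_{\om,\Psi}$, and then read off compactness from a tail version of that bound. First I would record the standard reduction: by the growth estimate \eqref{growthf} the unit ball of $A^p_{\om,\Psi}$ is a normal family, so $I$ is compact if and only if $\|f_n\|_{L^q_{\mu,\Phi}}\to0$ whenever $\|f_n\|_{A^p_{\om,\Psi}}\le1$ and $f_n\to0$ uniformly on compact subsets of $\D$. Thus it suffices to prove $\int_{\D}|f|^q\Phi(|f|)\,d\mu\lesssim1$ for $\|f\|_{A^p_{\om,\Psi}}\le1$, together with its localized form in which $\mu$ is replaced by $\mu_\rho=\mu|_{\{|z|>\rho\}}$ and the right-hand side by a quantity tending to $0$ as $\rho\to1$. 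I fix a lattice $\{z_k\}$ so that $\{\Delta(z_k,r)\}$ covers $\D$ and $\{\Delta(z_k,R)\}$ has bounded overlap for a fixed $R\in(r,1)$, and I abbreviate $\Phi_k=\Phi(\tfrac{1}{1-|z_k|})$, $\Psi_k=\Psi(\tfrac{1}{1-|z_k|})$ and $a_k=\int_{\Delta(z_k,R)}|f|^p\Psi(|f|)\om\,dA$; the bounded overlap gives $\sum_k a_k\lesssim\|f\|_{A^p_{\om,\Psi}}^p\le1$.

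The heart of the argument is a pointwise splitting of $\D$ according to the size of $|f|$. Fix a small $s>0$ and set $E=\{z\in\D:|f(z)|\ge(1-|z|)^{-s}\}$. Since $x\mapsto x^q\Phi(x)$ is almost increasing (Lemma~\ref{Lgrowth}), on $\D\setminus E$ I bound $|f(z)|^q\Phi(|f(z)|)\lesssim(1-|z|)^{-sq}\Phi(\tfrac{1}{1-|z|})$ (Lemma~\ref{Lproperties}), and by Lemma~\ref{Lgrowth} the latter is $\lesssim(1-|z|)^{-\veps}$ for arbitrarily small $\veps>0$; choosing $s,\veps$ so that $sq+\veps$ lies below the exponent furnished by Lemma~\ref{lemma1}, the contribution of $\D\setminus E$ is dominated by $\int_{\D}(1-|z|)^{-(sq+\veps)}\,d\mu<\infty$, uniformly in $f$. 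On $E$ the decisive gain is that $|f(z)|$ is trapped between two powers of $\tfrac{1}{1-|z|}$: the lower bound is the definition of $E$, and the upper bound $|f(z)|\lesssim(1-|z|)^{-c}$ comes from \eqref{growthf} and Lemma~\ref{DhatDcheck}. Hence $\log(e+|f(z)|)\asymp\log\tfrac{e}{1-|z|}$ on $E$, so by Lemma~\ref{Lgrowth} both $\Phi(|f(z)|)\asymp\Phi_k$ and $\Psi(|f(z)|)\asymp\Psi_k$ on $\Delta(z_k,r)\cap E$, regardless of whether $\Phi/\Psi^{q/p}$ is almost increasing or almost decreasing.

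On the lattice I then estimate, for each $k$ whose disk meets $E$,
\[
\int_{\Delta(z_k,r)\cap E}|f|^q\Phi(|f|)\,d\mu\lesssim \Phi_k\,\mu(\Delta(z_k,r))\,M_k^q,\qquad M_k=\sup_{\Delta(z_k,r)}|f|,
\]
using almost monotonicity of $x\mapsto x^q\Phi(x)$ and $\Phi(|f|)\asymp\Phi_k$. A sub-mean value property for $|f|^p\Psi(|f|)$ with respect to $\om\in\DDD$ (as in \cite{BergZygD}) gives $M_k^p\Psi(M_k)\lesssim a_k/\om(S(z_k))$, while the trapping of $M_k$ gives $\Psi(M_k)\asymp\Psi_k$; combining these,
\[
\Phi_k\,M_k^q\lesssim \Phi_k\Big(\tfrac{a_k}{\om(S(z_k))\Psi_k}\Big)^{q/p}=\frac{\Phi_k}{(\om(S(z_k))\Psi_k)^{q/p}}\,a_k^{q/p}.
\]
Summing over $k$ and applying Hölder with exponents $\tfrac{p}{p-q}$ and $\tfrac{p}{q}$ produces $\big(\sum_k C_k^{p/(p-q)}\big)^{(p-q)/p}\big(\sum_k a_k\big)^{q/p}$ with $C_k=\mu(\Delta(z_k,r))\Phi_k(\om(S(z_k))\Psi_k)^{-q/p}$; after converting the sum to an integral via $\om(S(z_k))\asymp\widetilde{\om}(z_k)(1-|z_k|)^2$ and the comparability of $\mu(\Delta(\cdot,r))$ on each cell, the first factor is comparable to the integral in \eqref{assump}, and the second is $\lesssim1$. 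Together with the $\D\setminus E$ estimate this yields boundedness.

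For compactness I repeat the two estimates with $\mu$ replaced by $\mu_\rho$: the $\D\setminus E$ part is then controlled by $\int_{\{|z|>\rho\}}(1-|z|)^{-(sq+\veps)}\,d\mu$, which tends to $0$ as $\rho\to1$ by Lemma~\ref{lemma1}, and the $E$ part by the tail $\int_{\{|z|>\rho'\}}(\cdots)\,\widetilde{\om}\,dA$ of the finite integral in \eqref{assump}. Choosing $\rho$ to make both tails small and then letting $n\to\infty$ (so that $\int_{\{|z|\le\rho\}}|f_n|^q\Phi(|f_n|)\,d\mu\to0$ by uniform convergence and finiteness of $\mu$) gives $\|f_n\|_{L^q_{\mu,\Phi}}\to0$. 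The main obstacle is precisely the interaction of the two inducing functions: since $\Phi,\Psi\in\LL$ are only almost monotonic, the pointwise passage from $\Phi(|f|),\Psi(|f|)$ to the radial quantities $\Phi_k,\Psi_k$ can fail when $\Phi/\Psi^{q/p}$ is almost decreasing and $|f|$ is small. The splitting at level $(1-|z|)^{-s}$ is designed to circumvent this—on $E$ the values of $|f|$ are squeezed between two polynomial scales, legitimizing the replacement for both functions at once, while the complementary region is harmless owing to the a priori moment bound of Lemma~\ref{lemma1}.
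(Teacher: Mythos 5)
Your strategy is essentially the one the paper uses, and the skeleton is sound: both arguments reduce compactness to showing $\|f_j\|_{L^q_{\mu,\Phi}}\to 0$ for bounded sequences tending to $0$ on compacta, both split the disk according to whether $|f|$ exceeds a small negative power of $1-|z|$ and dispose of the small-$|f|$ region via the moment bound of Lemma~\ref{lemma1}, and both finish with H\"older at exponents $\frac{p}{p-q}$ and $\frac{p}{q}$, pairing the quantity in \eqref{assump} against the $A^p_{\om,\Psi}$-quasinorm, with compactness read off from the tails. The genuine difference is in the implementation of the main estimate: you discretize over a lattice and invoke a local sub-mean-value inequality $M_k^p\Psi(M_k)\lesssim a_k/\om(S(z_k))$, whereas the paper stays continuous, using subharmonicity of $|f_j|^q$ and Fubini to produce the factor $\mu(\Delta(\zeta,r))/\om(S(\zeta))$ and then an integration-by-parts step from \cite[Proof of Theorem 1(i)]{BergZygD} to trade $\Psi\left(\frac{1}{1-|\zeta|}\right)$ back for $\Psi(|f_j(\zeta)|)$. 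The continuous route avoids your local lemma entirely; your route treats the two monotonicity cases for $\Phi$ and $\Psi$ more uniformly via the trapping on $E$.

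Three points need repair before this is complete. First, the inequality $M_k^p\Psi(M_k)\lesssim a_k/\om(S(z_k))$ is not available in \cite{BergZygD} in this local form and does not follow from subharmonicity alone: to pull $\Psi(|f|)$ out of the integral you must know that $|f|$ is comparable on a logarithmic scale across the cell, so the trapping argument has to be run inside this step as well (it does work on the cells meeting $E$, which are the only ones you use, but it must be written out). Second, Lemma~\ref{Lgrowth} alone does not yield $\Phi(x)\asymp\Phi(y)$ from $\log(e+x)\asymp\log(e+y)$, since the exponents $c_2$ and $C_2$ may differ; the correct justification on $E$ is the almost monotonicity of $\Phi$ combined with Lemma~\ref{Lproperties}(i)--(ii) applied to the two-sided polynomial bounds. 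The same caveat applies to your claim that $x\mapsto x^q\Phi(x)$ is almost increasing, which is false in general but also unnecessary: the one-sided bound $|f|^q\Phi(|f|)\lesssim(1-|z|)^{-sq-\veps}$ on $\D\setminus E$ follows directly from Lemma~\ref{Lgrowth}. Third, a minor bookkeeping point: the lattice radius must be chosen small enough relative to the $r$ of \eqref{assump} so that $\mu(\Delta(z_k,\cdot))\lesssim\mu(\Delta(z,r))$ for $z$ in each cell when you convert the lattice sum back into the integral.
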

\begin{proof}
Let $\{f_j\}$ be a norm bounded sequence in $A^p_{\om,\Psi}$ such that $f_j\to 0$ uniformly on compact subsets of $\D$. It is well-known that to prove the proposition, we need to show that $\|f_j\|^q_{L^q_{\mu,\Phi}} \to 0$ as $j \to \infty$, see for instance \cite[Proposition 8]{BergZygD} for a proof in the case of the weighted Bergman-Zygmund spaces. Let $\veps>0$. For every $R \in (0,1)$ we have 
\begin{equation*}
    \begin{split}
        \|f_j\|_{L^q_{\mu,\Phi}}^q&=\int_{\D}|f_j(z)|^q\Phi\left(|f_j(z)|\right)\,d\mu(z)\\
                                  &=\int_{D(0,R)}|f_j(z)|^q\Phi\left(|f_j(z)|\right)\,d\mu(z)+\int_{\D \setminus D(0,R)}|f_j(z)|^q\Phi\left(|f_j(z)|\right)\,d\mu(z).
    \end{split}
\end{equation*}
As $\mu$ is a finite measure by Lemma \ref{lemma1}, the dominated convergence theorem shows that there exists an $N \in \N$ such that 
$\int_{D(0,R)}|f_j|^q\Phi\left(|f_j|\right)\,d\mu<\veps$ for all $j \geq N(\veps,R)$. Therefore, we only need to consider the second integral. Assume first that $\Phi$ is almost decreasing. Fix $\delta>0$ such that $\int_{\D}\frac{d\mu(z)}{(1-|z|)^{\delta}}<\infty$, which can be done by Lemma~\ref{lemma1}. Thus, there exists $\rho=\rho(\veps) \in (0,1)$ such that $\int_{\D \setminus D(0,\rho)}\frac{d\mu(z)}{(1-|z|)^{\delta}}<\veps$. Define the set $E_{\delta}(f_j)=\left\{z \in \D: |f_j(z)|<\frac{1}{(1-|z|)^{\delta/q}}\right\}$. Therefore, using Lemma~\ref{Lproperties}(ii) we deduce
\begin{equation}\label{splitdisk}
\begin{split}
    \|f_j\|^q_{L^q_{\mu,\Phi}}&\leq \veps +\int_{\D \setminus D(0,R)}|f_j(z)|^q\Phi(|f_j(z)|)\,d\mu(z)\\
    &\lesssim \veps + \int_{\left(\D \setminus D(0,R) \right) \cap E_{\delta}}\frac{d\mu(z)}{(1-|z|)^{\delta}}\\
    &\quad  + \int_{\left(\D \setminus D(0,R)\right) \setminus E_{\delta}}|f_j(z)|^q\Phi\left(\frac{1}{1-|z|}\right)\,d\mu(z).
    \end{split}
\end{equation}
In the case of the integral over $(\D \setminus D(0,R)) \cap E_{\delta}$, we have
\begin{equation*}
    \int_{\left(\D \setminus D(0,R) \right) \cap E_{\delta}}\frac{d\mu(z)}{(1-|z|)^{\delta}}<\veps,
\end{equation*}
when we fix $R>\rho$. For the integral over $\left(\D \setminus D(0,R)\right) \setminus E_{\delta}$, we use the subharmonicity of $|f_j|^q$, Fubini's theorem, Lemma~\ref{Lproperties}(i) and H\"older's inequality, which for $R>r$ yield
\begin{equation*}
\begin{split}
    &\quad \int_{\left(\D \setminus D(0,R)\right) \setminus E_{\delta}}|f_j(z)|^q\Phi\left(\frac{1}{1-|z|}\right)\,d\mu(z)\\
    &\lesssim \int_{\D \setminus D(0,R)}\left(\int_{\Delta(z,r)}\frac{|f_j(\zeta)|^q}{(1-|\zeta|)^2}\Phi\left(\frac{1}{1-|\zeta|}\right)\frac{\whw(\zeta)}{\whw(\zeta)}\,dA(\zeta)\right)\,d\mu(z)\\
    &\asymp \int_{\D}|f_j(\zeta)|^q\Phi\left(\frac{1}{1-|\zeta|}\right)\frac{\Psi\left(\frac{1}{1-|\zeta|}\right)^{\frac{q}{p}}}{\Psi\left(\frac{1}{1-|\zeta|}\right)^{\frac{q}{p}}}\frac{\whw(\zeta)}{1-|\zeta|}\frac{\mu\left(\Delta(\zeta,r)\cap \left(\D \setminus D(0,R)\right)\right)}{\om(S(\zeta))}\,dA(\zeta)\\
    &\leq \int_{\D \setminus D\left(0,\frac{R-r}{1-Rr}\right)}|f_j(\zeta)|^q\Phi\left(\frac{1}{1-|\zeta|}\right)\frac{\Psi\left(\frac{1}{1-|\zeta|}\right)^{\frac{q}{p}}}{\Psi\left(\frac{1}{1-|\zeta|}\right)^{\frac{q}{p}}}\frac{\whw(\zeta)}{1-|\zeta|}\frac{\mu\left(\Delta(\zeta,r)\right)}{\om(S(\zeta))}\,dA(\zeta)\\
    &\leq \left(\int_{\D \setminus D\left(0,\frac{R-r}{1-Rr}\right)}|f_j(\zeta)|^p\Psi\left(\frac{1}{1-|\zeta|}\right)\frac{\whw(\zeta)}{1-|\zeta|}\,dA(\zeta)\right)^{\frac{q}{p}}\\
    &\quad \cdot \left(\int_{\D \setminus D\left(0,\frac{R-r}{1-Rr}\right)}\left(\frac{\mu(\Delta(\zeta,r))}{\om(S(\z))}\right)^{\frac{p}{p-q}}\left(\frac{\Phi\left(\frac{1}{1-|\z|}\right)}{\Psi\left(\frac{1}{1-|\z|}\right)^{\frac{q}{p}}}\right)^{\frac{p}{p-q}}\frac{\whw(\z)}{1-|\z|}\,dA(\z)\right)^{\frac{p-q}{p}}\\
    &\lesssim \left(\|f_j\|_{A^p_{\om,\Psi}}^q+1\right)\\
    &\quad \cdot\left(\int_{\D \setminus D\left(0,\frac{R-r}{1-Rr}\right)}\left(\frac{\mu(\Delta(\z,r))}{\om(S(\z))}\right)^{\frac{p}{p-q}}\left(\frac{\Phi\left(\frac{1}{1-|\z|}\right)}{\Psi\left(\frac{1}{1-|\z|}\right)^{\frac{q}{p}}}\right)^{\frac{p}{p-q}}\frac{\whw(\z)}{1-|\z|}\,dA(\z)\right)^{\frac{p-q}{p}}.
\end{split}
\end{equation*}
Here the first inequality follows by removing the set of integration where $\Delta(\zeta,r)\cap \left(\D \setminus D(0,R)\right)=\emptyset$ and using the estimate $\mu\left(\Delta(\zeta,r)\cap \left(\D \setminus D(0,R)\right)\right) \leq \mu\left(\Delta(\zeta,r)\right)$. The last asymptotic inequality follows by first estimating the area of integration to the whole disk, using integration by parts in the same way as was done in \cite[Proof of Theorem 1(i), (3.8)]{BergZygD} and then either using the estimate \eqref{growthf} or the method shown in \eqref{splitdisk} together with Lemma~\ref{Lproperties}(i)-(ii) depending on the monotonicity of $\Psi$.
For every fixed $r \in (0,1)$, the function 
$g(R)=\frac{R-r}{1-Rr}$ is increasing in $(0,1)$ and $g(R) \to 1$, as $R \to 1^-$. By the assumption \eqref{assump}, we deduce that
\begin{equation}\label{Rcondition}
    \left(\int_{\D \setminus D\left(0,\frac{R-r}{1-Rr}\right)}\left(\frac{\mu(\Delta(\z,r))}{\om(S(\z))}\right)^{\frac{p}{p-q}}\left(\frac{\Phi\left(\frac{1}{1-|\z|}\right)}{\Psi\left(\frac{1}{1-|\z|}\right)^{\frac{q}{p}}}\right)^{\frac{p}{p-q}}\frac{\whw(\z)}{1-|\z|}\,dA(\z)\right)^{\frac{p-q}{p}}<\veps,
\end{equation}
whenever $\frac{R-r}{1-Rr}>R_0$, where $R_0=R_0(\veps) \in (0,1)$. Therefore, \eqref{Rcondition} holds for every $R>\frac{R_0+r}{1+R_0r}$. Thus if $R>\max\left\{r, \rho, \frac{R_0+r}{1+R_0r}\right\}$, we have
\begin{equation*}
    \|f_j\|_{L^q_{\mu,\Phi}} \lesssim \veps,
\end{equation*}
for every $j \geq N(R,\veps)=N(\veps)$, which concludes the proof in the case of $\Phi$ being almost decreasing. In the case of almost increasing $\Phi$, the proof follows the same reasoning and the only difference is that we can ignore the step of splitting the disk in \eqref{splitdisk} and instead use the growth estimate \eqref{growthf} and Lemma~\ref{Lproperties}(i)-(iii). This concludes the proof.       
\end{proof}

With these preparations we are ready to prove Theorem~\ref{Theorem1}.

\medskip

\Prf \emph{Theorem~\ref{Theorem1}}.
   Since every compact operator is bounded, (ii) implies (i). Further, by Proposition~\ref{3imp1}, (iii) implies (ii), so it remains to prove that (i) implies (iii). Define $W(z)=\Psi\left(\frac{1}{1-|z|}\right)\om(z)$. Under the hypothesis $\om \in \DDD$, it is known that $W \in \DDD$ and $\|f\|_{A^p_{\om,\Psi}}\lesssim 1+\|f\|_{A^p_{W}}$ in such a way that the spaces contain the same functions by \cite[Proposition 11]{BergZygD}. Then \cite[Theorem 1]{PRS3} gives us an atomic decomposition for the Bergman spaces induced by the weight~$W$. More precisely, for $\{a_{k}\} \in \ell^p$ and a sufficiently large constant $M=M(p,\om,\Psi)>0$ one has
    \begin{equation*}
        f(z)=\sum_{k=0}^{\infty}a_{k}\frac{(1-|z_{k}|^2)^{M-\frac{1}{p}}\whW(z_{k})^{-\frac{1}{p}}}{(1-\overline{z_{k}}z)^M}, \quad z \in \D,
    \end{equation*}
    in such a way that $\|\{a_{k}\}\|_{\ell^p} \gtrsim \|f\|_{A^p_W}$. Here $\{z_{k}\}$ is a sequence separated in the pseudohyperbolic metric, meaning $\inf_{j\neq k}\rho(z_j,z_k)>0$. Assume first that $\Phi$ is almost decreasing. Then we consider the $L^q_{\mu,\Phi}$-quasinorm of the function $f$. By the growth of $|f|$ from \eqref{growthf} and Lemma~\ref{Lproperties}(i)-(iii), we have
    \begin{equation}\label{eq1}
        \begin{split}
            C&\geq \int_{\D}|f(z)|^q\Phi(|f(z)|)\,d\mu(z) \gtrsim \int_{\D}|f(z)|^q\Phi\left(\frac{1}{1-|z|}\right)\,d\mu(z)\\
            &= \int_{\D}\left|\sum_{k=0}^{\infty}a_{k}\frac{(1-|z_{k}|^2)^{M-\frac{1}{p}}\whW(z_{k})^{-\frac{1}{p}}}{(1-\overline{z_{k}}z)^M}\right|^q\Phi\left(\frac{1}{1-|z|}\right)\,d\mu(z),
        \end{split}
    \end{equation}
    where $C=C(p,q,\om,\mu,\Psi,\Phi,\|\{a_{k}\}\|_{\ell^p})>0$ comes from the assumption that $\mu$ is a $(q,\Phi)$-Carleson measure for $A^p_{\om,\Psi}$. Assume next that $\Phi$ is almost increasing. Define the set $E_{\veps}(f)=\left\{z \in \D: |f(z)|<(1-|z|)^{-\frac{\veps}{2q}}\right\}$, where $\veps>0$ is the same as in Lemma~\ref{epsilonmulemma}. In this case, we have
    \begin{equation}\label{normestimate+1}
        \begin{split}
            \int_{\D}|f(z)|^q\Phi\left(\frac{1}{1-|z|}\right)\,d\mu(z)&\lesssim\int_{E_{\veps}(f)}\frac{1}{(1-|z|)^{\frac{\veps}{2}}}\Phi\left(\frac{1}{1-|z|}\right)\,d\mu(z)\\
            &\quad +\int_{\D \setminus E_{\veps}(f)}|f(z)|^q\Phi\left(|f(z)|\right)\,d\mu(z)\\
            &\lesssim\int_{E_{\veps}(f)}\frac{1}{(1-|z|)^{\frac{\veps}{2}+\eta}}\,d\mu(z)\\
            &\quad +\int_{\D \setminus E_{\veps}(f)}|f(z)|^q\Phi\left(|f(z)|\right)\,d\mu(z)\\
            &\lesssim 1+\int_{\D}|f(z)|^q\Phi\left(|f(z)|\right)\,d\mu(z),
        \end{split}
    \end{equation}
    where $\eta>0$ comes from Lemma~\ref{Lgrowth} and is chosen small enough so that $\eta<\frac{\veps}{2}$. Here we have also used Lemma~\ref{Lproperties}. These estimates yield us the same conclusion as was shown in \eqref{eq1} with a different constant $C$. Next, we replace $a_k$ with $a_kr_k(t)$, where $r_k(t)$ is the $k$th Rademacher function. Then integrating both sides from $0$ to $1$ with respect to $t$ and applying Khinchine's inequality together with Fubini's theorem yields
     \begin{equation*}
         \begin{split}
             C &\gtrsim \int_{\D}\left(\sum_{k=0}^{\infty}|a_{k}|^2\frac{(1-|z_{k}|^2)^{2M-\frac{2}{p}}\whW(z_{k})^{-\frac{2}{p}}}{|1-\overline{z_{k}}z|^{2M}}\right)^{\frac{q}{2}}\Phi\left(\frac{1}{1-|z|}\right)\,d\mu(z) \\
             &\gtrsim  \int_{\D}\left(\sum_{k=0}^{\infty}|a_{k}|^2\chi_{\Delta(z_{k},R)}(z)(1-|z_{k}|^2)^{-\frac{2}{p}}\whW(z_{k})^{-\frac{2}{p}}\right)^{\frac{q}{2}}\Phi\left(\frac{1}{1-|z|}\right)\,d\mu(z) \\
             &\asymp \sum_{k=0}^{\infty}|a_{k}|^q\mu(\Delta(z_{k},R))(1-|z_{k}|^2)^{-\frac{q}{p}}\whW(z_k)^{-\frac{q}{p}}\Phi\left(\frac{1}{1-|z_k|}\right)\\
             &\asymp \sum_{k=0}^{\infty}|a_{k}|^q\frac{\mu(\Delta(z_{k},R))\Phi\left(\frac{1}{1-|z_{k}|}\right)}{\left(\Psi\left(\frac{1}{1-|z_{k}|}\right)\om(S(z_k))\right)^{\frac{q}{p}}},
         \end{split}
     \end{equation*}
    for any fixed $R>0$, which the omitted constant depends on. Here we have also used Lemma~\ref{Lproperties}. We also need to note that each $z \in \D$ can only be contained in finitely many pseudohyperbolic disks $\Delta(z_{k},R)$ when $\{z_{k}\}$ is separated and $\inf_{j\neq k}\rho(z_j,z_k)<R$, see for instance~\cite[Lemma 3]{Lu1993}. Let $\{b_k\}=\{|a_k|^q\}$, which therefore belongs to $\ell^{\frac{p}{q}}$.
    Next we want to prove that 
    \begin{equation*}
     \sum_{k=0}^{\infty}\left(\frac{\mu(\Delta(z_{k},R))\Phi\left(\frac{1}{1-|z_{k}|}\right)}{\left(\Psi\left(\frac{1}{1-|z_{k}|}\right)\om(S(z_k))\right)^{\frac{q}{p}}}\right)^{\frac{p}{p-q}}<\infty.
     \end{equation*}
     This can be proved using duality. Assume that $\|\{b_k\}\|_{\ell^{\frac{p}{q}}}= 1$. Then, we define a linear functional $T: \ell^{\frac{p}{q}} \to \mathbb{R}$ as
     \begin{equation*}
         T(\{b_k\})=\sum_{k=0}^{\infty}b_{k}\frac{\mu(\Delta(z_{k},R))\Phi\left(\frac{1}{1-|z_{k}|}\right)}{\left(\Psi\left(\frac{1}{1-|z_{k}|}\right)\om(S(z_k))\right)^{\frac{q}{p}}}.
     \end{equation*}
     By the previous remarks, this operator is clearly bounded as the upper bound $C$ is uniform, and by duality one has 
     \begin{equation*}
         \frac{\mu(\Delta(z_{k},R))\Phi\left(\frac{1}{1-|z_{k}|}\right)}{\left(\Psi\left(\frac{1}{1-|z_{k}|}\right)\om(S(z_k))\right)^{\frac{q}{p}}} \in \ell^{\frac{p}{p-q}}.
     \end{equation*}
     From here, we want to prove the continuous version. Note that this argument holds for every $R \in (0,1)$ and for every sequence $\{z_k\}$ that is separated such that $\inf_{j\neq k}\rho(z_j,z_k)<R$. Let $r \in (0,1)$ be fixed and let $R=\frac{1+r}{2}$. Consider a separated sequence $\{z_k\}$ such that $\inf_{j \neq k}\rho(z_j,z_k)=\frac{1-r}{2}>0$ such that $\bigcup_k \Delta\left(z_k,\frac{1-r}{2}\right)=\D$. Sequences like this exist, see for example \cite[Lemma 4]{Lu1993}. Therefore, Lemma~\ref{DhatDcheck} yields
     \begin{equation*}
         \begin{split}
             &\quad \int_{\D}\left(\frac{\mu(\Delta(z,r))}{\om(S(z))}\right)^{\frac{p}{p-q}}\left(\frac{\Phi\left(\frac{1}{1-|z|}\right)}{\Psi\left(\frac{1}{1-|z|}\right)^{\frac{q}{p}}}\right)^{\frac{p}{p-q}}\frac{\whw(z)}{1-|z|}\,dA(z)\\
             &\leq \sum_{k=0}^{\infty}\int_{\Delta\left(z_k,\frac{1-r}{2}\right)}\left(\frac{\mu(\Delta(z,r))}{\om(S(z))}\right)^{\frac{p}{p-q}}\left(\frac{\Phi\left(\frac{1}{1-|z|}\right)}{\Psi\left(\frac{1}{1-|z|}\right)^{\frac{q}{p}}}\right)^{\frac{p}{p-q}}\frac{\whw(z)}{1-|z|}\,dA(z)\\
             &\leq \sum_{k=0}^{\infty}\mu(\Delta(z_k,R))^{\frac{p}{p-q}}\int_{\Delta\left(z_k,\frac{1-r}{2}\right)}\frac{1}{\om(S(z))^{\frac{p}{p-q}}}\left(\frac{\Phi\left(\frac{1}{1-|z|}\right)}{\Psi\left(\frac{1}{1-|z|}\right)^{\frac{q}{p}}}\right)^{\frac{p}{p-q}}\frac{\whw(z)}{1-|z|}\,dA(z)\\
             &\asymp \sum_{k=0}^{\infty}\left(\frac{\mu(\Delta(z_k,R))}{\om(S(z_k))}\right)^{\frac{p}{p-q}}\left(\frac{\Phi\left(\frac{1}{1-|z_k|}\right)}{\Psi\left(\frac{1}{1-|z_k|}\right)^{\frac{q}{p}}}\right)^{\frac{p}{p-q}}\int_{\Delta\left(z_k,\frac{1-r}{2}\right)}\frac{\whw(z)}{1-|z|}\,dA(z)\\
             &\lesssim \sum_{k=0}^{\infty}\left(\frac{\mu(\Delta(z_k,R))}{\om(S(z_k))}\right)^{\frac{p}{p-q}}\left(\frac{\Phi\left(\frac{1}{1-|z_k|}\right)}{\Psi\left(\frac{1}{1-|z_k|}\right)^{\frac{q}{p}}}\right)^{\frac{p}{p-q}}\widetilde{\om}(S(z_k))\\
             &\asymp \sum_{k=0}^{\infty}\left(\frac{\mu(\Delta(z_k,R))}{\om(S(z_k))}\right)^{\frac{p}{p-q}}\left(\frac{\Phi\left(\frac{1}{1-|z_k|}\right)}{\Psi\left(\frac{1}{1-|z_k|}\right)^{\frac{q}{p}}}\right)^{\frac{p}{p-q}}\om(S(z_k))\\
             &=\sum_{k=0}^{\infty}\left(\frac{\mu(\Delta(z_{k},R))\Phi\left(\frac{1}{1-|z_{k}|}\right)}{\left(\Psi\left(\frac{1}{1-|z_{k}|}\right)\om(S(z_k))\right)^{\frac{q}{p}}}\right)^{\frac{p}{p-q}}<\infty,
         \end{split}
     \end{equation*}
     which concludes the proof.
\hfill$\Box$ \newline \newline
\medskip
Next, using Theorem~\ref{Theorem1} we will prove Theorem~\ref{Theorem2}.
\medskip \newline
\begin{Prf}\emph{Theorem~\ref{Theorem2}}.
    As compact operators are clearly bounded, (ii) implies (i). Assume first (i) and we prove that it implies (iii). As $\om \in \DDD$ by the hypothesis, the weight $\om_{[\alpha]}(z) = (1-|z|)^{\alpha}\om(z)$ belongs to $\DDD$ for $\alpha>0$ by the definition of class $\DDD$ and Lemma~\ref{DhatDcheck}, see also \cite[Lemma 3]{PR2024} for a more general result. Therefore, by \cite[Proof of Proposition 11]{BergZygD} the weight $z \mapsto \Psi\left(\frac{1}{1-|z|}\right)(1-|z|)^{\alpha}\om(z)$ belongs to $\DDD$ for every $\alpha>0$. Let $f \in A^p_{\om_{[np]},\Psi}$ such that $\|f\|_{A^p_{\om_{[np]},\Psi}}^p\leq M$ for some $M>0$. It is well-known that there exists a function $g \in \mathcal{H}(\D)$ such that $g^{(n)}=f$ and $g^{(j)}(0)=0$ for $j \in \{0,1,\dots,n-1\}$. Now $g \in A^p_{\om,\Psi}$ and there exists a constant $K=K(M,p,\om,\Psi,n)>0$ such that $\left\|g\right\|_{A^p_{\om,\Psi}} \leq K$. This fact follows directly from the boundedness of the identity mappings between $A^p_{\om,\Psi}$ and its corresponding weighted Bergman space $A^p_{W}$, where $W(z)=\Psi\left(\frac{1}{1-|z|}\right)\om(z)$, see \cite[Proposition 11]{BergZygD}, and the Littlewood-Paley formula for weights in $\DDD$, see \cite[Theorem 5]{PelRat2020}. Thus, by the assumption that $D^{(n)}: A^p_{\om,\Psi} \to L^q_{\mu,\Phi}$ is bounded, it is clear that 
    \begin{equation*}
    \begin{split}
                 \int_{\D}|f(z)|^q\Phi(|f(z)|)\,d\mu(z)&=\int_{\D}\left|g^{(n)}(z)\right|^q\Phi\left(\left|g^{(n)}(z)\right|\right)\,d\mu(z)\\
                 &\leq C(K)=C(M,p,q,\om,\mu,\Psi,\Phi,n).
    \end{split}
    \end{equation*}
    Therefore, $I: A^p_{\om_{[np]},\Psi} \to L^q_{\mu,\Phi}$ is bounded. Hence, Theorem~\ref{Theorem1} yields
    \begin{equation*}
    \int_{\D}\left(\frac{\mu(\Delta(z,r))\Phi\left(\frac{1}{1-|z|}\right)}{\om_{[np]}(S(z))\Psi\left(\frac{1}{1-|z|}\right)^{\frac{q}{p}}}\right)^{\frac{p}{p-q}}\widetilde{\om_{[np]}}(z)\,dA(z)<\infty,
    \end{equation*}
    for every fixed $r \in (0,1)$. Next, we note that $\widehat{\om_{[np]}}(z) \asymp \whw(z)(1-|z|)^{np}$ holds for every $z \in \D$ by the definition of class $\DDD$, see once again \cite[Lemma 3]{PR2024} for a more general result. This estimate gives
    \begin{equation*}
    \begin{split}
        \left(\frac{\mu(\Delta(z,r))\Phi\left(\frac{1}{1-|z|}\right)}{\om_{[np]}(S(z))\Psi\left(\frac{1}{1-|z|}\right)^{\frac{q}{p}}}\right)^{\frac{p}{p-q}}\widetilde{\om_{[np]}}(z) 
        &\asymp \left(\frac{\mu(\Delta(z,r))\Phi\left(\frac{1}{1-|z|}\right)}{\om(S(z))\Psi\left(\frac{1}{1-|z|}\right)^{\frac{q}{p}}(1-|z|)^{np}}\right)^{\frac{p}{p-q}}\widetilde{\om}(z)(1-|z|)^{np}\\
        &\asymp \left(\frac{\mu(\Delta(z,r))\Phi\left(\frac{1}{1-|z|}\right)}{\om(S(z))\Psi\left(\frac{1}{1-|z|}\right)^{\frac{q}{p}}(1-|z|)^{nq}}\right)^{\frac{p}{p-q}}\widetilde{\om}(z), \quad z \in \D,
    \end{split}
    \end{equation*}
    which proves (iii).
    
    Next, we assume (iii). It is well-known that to prove the compactness, it is sufficient to show that for each bounded sequence $\{f_j\}$ in $ A^p_{\om,\Psi}$, which converges to $0$ uniformly in compact subsets of $\D$, we have
    $\lim_{j \to \infty}\left\|f_j^{(n)}\right\|_{L^q_{\mu,\Phi}}=0$. For the convenience of the reader, we show a proof of this claim following \cite[Proposition 8]{BergZygD}. Let $\sup_{j \in \N}\|f_j\|_{A^p_{\om,\Psi}}\leq M$ for some $M>0$.
Clearly $\{f_j\}$ is uniformly bounded on compact subsets of $\D$ by the estimate \eqref{growthf}. Thus, a normal family argument proves that there exists a subsequence $\{f_{j_k}\}$ converging to an analytic function $f$ uniformly in compact subsets of $\D$. As $\|\cdot\|_{A^p_{\om,\Psi}}$ is a quasinorm, standard arguments yield that $f \in A^p_{\om,\Psi}$. By Weierstrass's theorem, $f_{j_k}^{(n)} \to f^{(n)}$ uniformly in compact subsets of $\D$. Thus, 
$
\lim_{k \to \infty}\left\|D^{(n)}(f_{j_k})-f^{(n)}\right\|_{L^q_{\mu,\Phi}}=0
$
by the assumption, which proves that $D^{(n)}: A^p_{\om,\Psi} \to L^q_{\mu,\Psi}$ is compact. 
    
We are now ready to prove the final implication. Let $\{f_j\}$ be a sequence in $A^p_{\om,\Psi}$ which satisfies $\sup_{j \in \N}\|f_j\|_{A^p_{\om,\Psi}}\leq M$ for some $M>0$, such that $f_j$ tends to $0$ uniformly in compact subsets of $\D$ and let $\veps>0$. This implies that $f_j^{(n)}$ tends to $0$ uniformly in compact subsets of $\D$. Therefore, we use exactly the same reasoning as was used in the proof of Proposition~\ref{3imp1}, which yields
\begin{equation*}
\begin{split}
        \left\|f_j^{(n)}\right\|_{L^q_{\mu,\Phi}}^q&=\int_{\D}\left|f_j^{(n)}(z)\right|^q\Phi\left(\left|f_j^{(n)}(z)\right|\right)\,d\mu(z)\lesssim \veps 
        + \left(\left\|f_j^{(n)}\right\|_{A^p_{\om_{[np]},\Psi}}^q+1\right)\\
        &\quad \cdot \left(\int_{\D \setminus D\left(0,\frac{R-r}{1-Rr}\right)}\left(\frac{\mu(\Delta(\z,r))}{\om_{[np]}(S(\z))}\right)^{\frac{p}{p-q}}\left(\frac{\Phi\left(\frac{1}{1-|\z|}\right)}{\Psi\left(\frac{1}{1-|\z|}\right)^{\frac{q}{p}}}\right)^{\frac{p}{p-q}}\frac{\widehat{\om_{[np]}}(\z)}{1-|\z|}\,dA(\z)\right)^{\frac{p-q}{p}},
\end{split}
\end{equation*}
where $r$ is chosen such that $\Upsilon^{n,\mu,\om}_{\Phi,\Psi,r} \in L^{\frac{p}{p-q}}_{\widetilde{\om}}$, and $0<R<1$ and $j$ are large enough, see Theorem~\ref{Theorem2}(iii) and Proof of Proposition~\ref{3imp1}, respectively. Then \cite[Proposition 11]{BergZygD} and the Littlewood-Paley formula for weights in $\DDD$ together with the assumption $\sup_{j \in \N}\|f_j\|_{A^p_{\om,\Psi}} \leq M$  shows that $\sup_{j \in \N}\left\|f_j^{(n)}\right\|_{A^p_{\om_{[np]},\Psi}}^q\leq C(M,p,q,\om,\Psi,n)$. We conclude the proof by following exactly the same reasoning as was done on the proof of Proposition~\ref{3imp1}.
\end{Prf}

\section*{Acknowledgement}
The author wishes to express his gratitude to professor Jouni R\"atty\"a for his valuable comments on the manuscript. In addition, the author is grateful for the many helpful comments given by the anonymous referee, which significantly improved the exposition of the manuscript.


\begin{thebibliography}{99}

\bibitem{AD}        E.~V. Abakumov and E.~S. Doubtsov, Moduli of holomorphic functions and logarithmically convex radial weights, Bull. Lond. Math. Soc. 47 (2015), no.~3, 519--532.


\bibitem{BR}  C. Bennett and K. Rudnick, On Lorentz-Zygmund spaces, Dissertationes Math. (Rozprawy Mat.) 175 (1980), 67 pp.

\bibitem{BR2}  C. Bennett and R.~C. Sharpley, Interpolation of operators, Pure and Applied Mathematics, 129, Academic Press, Boston, MA, 1988.

\bibitem{Carleson}      L.~A.~E. Carleson, Interpolations by bounded analytic functions and the corona problem, Ann. of Math. (2) 76 (1962), 547--559.

\bibitem{BergZyg} H.~R. Cho, H. Koo and Y.~J. Lee, From Zygmund space to Bergman-Zygmund space, Banach J. Math. Anal. 18 (2024), no.~3, Paper No. 58, 40 pp.

\bibitem{BergZygD} H. R. Cho, H. Koo, Y. J. Lee, A. Pennanen, J. R\"atty\"a and F. Wu, Carleson measures for weighted Bergman-Zygmund spaces, preprint, https://arxiv.org/pdf/2405.13455.

\bibitem{Duan}          Y. Duan, J. R\"atty\"a, S. Wang, F. Wu, 
                        Two weight inequality for Hankel form on weighted Bergman spaces induced by doubling weights, Adv. Math. 431 (2023), Paper No. 109249, 47 pp.

\bibitem{GPR}           J. Gr\"ohn, J.~\'A. Pel\'aez and J. R\"atty\"a, Jointly maximal products in weighted growth spaces, Ann. Acad. Sci. Fenn. Math. 39 (2014), no.~1, 109--118.

\bibitem{Hastings}      W.~W. Hastings, A Carleson measure theorem for Bergman spaces, Proc. Amer. Math. Soc. 52 (1975), 237--241.

\bibitem{LRW2021}       B. Liu, J. R\"atty\"a and F. Wu, Compact differences of composition operators on Bergman spaces induced by doubling weights, J. Geom. Anal. 31 (2021), no.~12, 12485--12500.

\bibitem{Luecking1}     D.~H. Luecking, Multipliers of Bergman spaces into Lebesgue spaces, Proc. Edinburgh Math. Soc. (2) 29 (1986), no.~1, 125--131.

\bibitem{Luecking2}     D.~H. Luecking, Embedding derivatives of Hardy spaces into Lebesgue spaces, Proc. London Math. Soc. (3) 63 (1991), no.~3, 595--619.

\bibitem{Lu1993}       D.~H. Luecking, Embedding theorems for spaces of analytic functions via Khinchine's inequality, Michigan Math. J. 40 (1993), no.~2, 333--358.


\bibitem{PelSum14}      J.~\'A. Pel\'aez, Small weighted Bergman spaces, in Proceedings of the Summer School in Complex and Harmonic Analysis, and Related Topics, 29--98, Publ. Univ. East. Finl. Rep. Stud. For. Nat. Sci., 22, Univ. East. Finl., Fac. Sci. For., Joensuu, 2016.
                                                
\bibitem{PelDela}           J.~\'A. Pel\'aez and E. de~la~Rosa, Littlewood-Paley inequalities for fractional derivative on Bergman spaces, Ann. Fenn. Math. 47 (2022), no.~2, 1109--1130.
						
\bibitem{PelRat2020}    J.~\'A. Pel\'aez and J. R\"atty\"a, Bergman projection induced by radial weight, Adv. Math. 391 (2021), Paper No. 107950, 70 pp.

\bibitem{PR2023}  			J.~\'A. Pel\'aez and J. R\"atty\"a, Bergman projection and BMO in hyperbolic metric: improvement of classical result, Math. Z. 305 (2023), no.~2, Paper No. 19, 9 pp.

\bibitem{PR2024}            J.~\'A. Pel\'aez and J. R\"atty\"a, Small Hankel operator induced by measurable symbol acting on weighted Bergman spaces, preprint, https://arxiv.org/pdf/2407.04645.

\bibitem{PRS1}              J.~\'A. Pel\'aez, J. R\"atty\"a and K. Sierra, 
Berezin transform and Toeplitz operators on Bergman spaces induced by regular weights, J. Geom. Anal. 28 (2018), no.~1, 656--687.
                                                
\bibitem{PRS3}              J.~\'A. Pel\'aez, J. R\"atty\"a and K. Sierra, Atomic decomposition and Carleson measures for weighted mixed norm spaces, J. Geom. Anal. 31 (2021), no.~1, 715--747.
								
					
\bibitem{PRV} 					F. P\'{e}rez-Gonz\'{a}lez,  J. R\"{a}tty\"{a} and T. Vesikko,
												Norm inequalities for weighted Dirichlet spaces with applications to conformal maps, preprint,
												https://arxiv.org/pdf/2311.06191.pdf.


\bibitem{Ramey}     W.~C. Ramey and D.~C. Ullrich, Bounded mean oscillation of Bloch pull-backs, Math. Ann. 291 (1991), no.~4, 591--606.

\bibitem{Wu}        Z.~J. Wu, Carleson measures and multipliers for Dirichlet spaces, J. Funct. Anal. 169 (1999), no.~1, 148--163.

\end{thebibliography}
\end{document}